\documentclass[12pt]{article}
\usepackage{amsmath,amssymb,amsfonts,amsthm}
\usepackage{eucal,oldgerm}
\usepackage[dvips]{graphics}
\usepackage{pstricks,pst-plot,pst-node,pst-coil}
\newpsobject{showgrid}{psgrid}{subgriddiv=1,griddots=5,gridlabels=0pt}
\usepackage[arrow,matrix,tips,2cell]{xy}

\newcommand{\zz}{{\mathfrak{z}}}

\newcommand{\com}{{\mathbb C}}

\newcommand{\bC}{\mathsf{C}}

\newcommand\FF{\mathbb F}
\newcommand\ZZ{\mathsf Z}
\newcommand\OO{\mathcal O}
\newcommand{\T}{{\mathbf{T}}}

\newcommand{\C}{\mathbb{C}}
\newcommand{\Q}{\mathbb{Q}}
\newcommand{\Z}{\mathbb{Z}}

\newcommand{\cO}{\mathcal{O}}

\newcommand{\Pp}{{\mathbf{P}^1}}

\newcommand{\rarr}{\rightarrow}

\newcommand{\bA}{\mathcal{A}}
\newcommand{\bF}{\mathcal{F}}
\newcommand{\bD}{\mathcal{D}}
\newcommand{\bZ}{\mathsf{Z}}

\newcommand{\CC}{{\widehat{C}}}
\newcommand{\bbullet}{{\widehat{\bullet}}}

\newtheorem{Theorem}{Theorem}
\newtheorem{Lemma}{Lemma}

\newtheorem{Proposition}[Lemma]{Proposition}
\newtheorem{Conjecture}{Conjecture}

\begin{document}
\title{Descendent theory for stable pairs on toric
3-folds}
\author{R. Pandharipande and A. Pixton}
\date{April 2012}
\maketitle

\begin{abstract}
We prove the rationality of the descendent partition function
for stable pairs on
nonsingular toric 3-folds.
The method uses a geometric reduction of the
2- and 3-leg descendent vertices to the 1-leg case.
As a consequence, 
we prove the rationality of the 
relative stable pairs partition functions for all log Calabi-Yau
geometries of the form $(X,K3)$ where $X$ is a
nonsingular toric 3-fold.
\end{abstract}

\maketitle

\setcounter{tocdepth}{1} 
\tableofcontents


\setcounter{section}{-1}
\section{Introduction}

\subsection{Descendents}\label{dess}
Let $X$ be a nonsingular 3-fold, and let
$\beta \in H_2(X,\mathbb{Z})$ be a nonzero class. We will study here the
moduli space of stable pairs
$$[\OO_X \stackrel{s}{\rightarrow} F] \in P_n(X,\beta)$$
where $F$ is a pure sheaf supported on a Cohen-Macaulay subcurve of $X$, 
$s$ is a morphism with 0-dimensional cokernel, and
$$\chi(F)=n, \  \  \ [F]=\beta.$$
The space $P_n(X,\beta)$
carries a virtual fundamental class obtained from the 
deformation theory of complexes in
the derived category \cite{pt}.

Since $P_n(X,\beta)$ is a fine moduli space, there exists a universal sheaf
$$\FF \rightarrow X\times P_{n}(X,\beta),$$
see Section 2.3 of \cite{pt}.
For a stable pair $[\OO_X\to F]\in P_{n}(X,\beta)$, the restriction of
$\FF$
to the fiber
 $$X \times [\OO_X \to F] \subset 
X\times P_{n}(X,\beta)
$$
is canonically isomorphic to $F$.
Let
$$\pi_X\colon X\times P_{n}(X,\beta)\to X,$$
$$\pi_P\colon X\times P_{n}(X,\beta)
\to P_{n}(X,\beta)$$
 be the projections onto the first and second factors.
Since $X$ is nonsingular
and
$\FF$ is $\pi_P$-flat, $\FF$ has a finite resolution 
by locally free sheaves.
Hence, the Chern character of the universal sheaf $\FF$ on 
$X \times P_n(X,\beta)$ is well-defined.
By definition, the operation
$$
\pi_{P*}\big(\pi_X^*(\gamma)\cdot \text{ch}_{2+i}(\FF)
\cap(\pi_P^*(\ \cdot\ )\big)\colon 
H_*(P_{n}(X,\beta))\to H_*(P_{n}(X,\beta))
$$
is the action of the descendent $\tau_i(\gamma)$, where
$\gamma \in H^*(X,\Z)$.

For nonzero $\beta\in H_2(X,\Z)$ and arbitrary $\gamma_i\in H^*(X,\Z)$,
define the stable pairs invariant with descendent insertions by
\begin{eqnarray*}
\left\langle \prod_{j=1}^k \tau_{i_j}(\gamma_j)
\right\rangle_{\!n,\beta}^{\!X}&  = &
\int_{[P_{n}(X,\beta)]^{vir}}
\prod_{j=1}^k \tau_{i_j}(\gamma_j) \\
& = & 
\int_{P_n(X,\beta)} \prod_{j=1}^k \tau_{i_j}(\gamma_{j})
\Big( [P_{n}(X,\beta)]^{vir}\Big).
\end{eqnarray*}
The partition function is 
$$
\ZZ^X_{\beta}\left(   \prod_{j=1}^k \tau_{i_j}(\gamma_{j})
\right)
=\sum_{n} 
\left\langle \prod_{j=1}^k \tau_{i_j}(\gamma_{j}) 
\right\rangle_{\!n,\beta}^{\!X}q^n.
$$

Since $P_n(X,\beta)$ is empty for sufficiently negative
$n$, 
$\ZZ^X_{\beta}\big(   \prod_{j=1}^k \tau_{i_j}(\gamma_{j})
\big)$
is a Laurent series in $q$. The following conjecture was made in 
\cite{pt2}.

\begin{Conjecture}
\label{111} 
The partition function
$\ZZ_{\beta}^X\big(   \prod_{j=1}^k \tau_{i_j}(\gamma_{j})
\big)$ is the 
Laurent expansion of a rational function in $q$.
\end{Conjecture}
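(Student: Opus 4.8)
The plan is to compute $\ZZ^X_\beta$ by $\T$-equivariant virtual localization and to reduce the rationality question to a statement about the local contributions at the torus-fixed points of $X$. Since $X$ is a nonsingular toric $3$-fold, the torus $\T=(\Cs)^3$ acts on $X$ and hence on each $P_n(X,\beta)$. First I would lift the classes $\gamma_j\in H^*(X,\Z)$ to $\T$-equivariant classes supported at the fixed points and invariant curves; because $X$ is toric such a basis exists, and the non-equivariant integrals are recovered by specializing the equivariant parameters. Applying the virtual localization formula to $[P_n(X,\beta)]^{vir}$ writes each descendent invariant as a sum over the $\T$-fixed loci. The $\T$-fixed stable pairs are supported on the $1$-skeleton of $X$ and are classified combinatorially by a $3$-dimensional partition (monomial ideal) at each vertex of the toric graph together with a $2$-dimensional partition along each compact edge. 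Since $\tau_i(\gamma_j)$ is $\pi_{P*}$ of $\ch_{2+i}(\FF)$ capped with $\pi_X^*\gamma_j$, a descendent whose equivariant $\gamma_j$ is supported near a single fixed point contributes, after localization, factors built from the $\T$-character of $\FF$ at that point. I would aim to prove rationality already at the equivariant level, as a series in $q$ with coefficients in the localized equivariant cohomology ring, and then specialize.

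Organizing the localization sum by the underlying decorated graph, the series factors into a product of \emph{vertex} and \emph{edge} contributions. Each vertex $v$ meets $1$, $2$, or $3$ of the invariant curves, and its local contribution is the corresponding \emph{$1$-, $2$-, or $3$-leg descendent vertex} $\bW$, a series in $q$ carrying the descendent insertions assigned to $v$. The edge factors are explicit rational functions of $q$. Hence it suffices to prove that each descendent vertex is the Laurent expansion of a rational function, and that rationality survives the summation over the $2$-dimensional edge partitions; equivalently, that the \emph{capped} descendent vertices are rational. Capping replaces the formal legs of $\bW$ by genuine compact relative geometries glued to $v$ along the invariant curves, turning $\bW$ into an honest relative stable-pairs descendent series.

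The heart of the argument is a geometric reduction of the $2$- and $3$-leg capped descendent vertices to the $1$-leg case, where rationality is taken as the known base input. The plan is to realize the relevant $2$- or $3$-leg local geometry inside a toric degeneration and to apply the degeneration formula for relative stable pairs. Degenerating the base so that the vertex breaks into components, each meeting only one distinguished invariant curve, should express the $2$- or $3$-leg capped descendent series as a finite combination, with coefficients rational in $q$, of products of $1$-leg relative descendent series glued along the relative divisors. Granting rationality of the $1$-leg capped vertex and of the gluing (diagonal) terms, rationality of the higher-leg vertices follows by induction on the number of legs, and assembling the factorization of the first two steps then yields rationality of $\ZZ^X_\beta$.

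The main obstacle is controlling the descendent insertions through this degeneration. A descendent $\tau_i(\gamma)$ is built from $\ch_{2+i}(\FF)$, which is \emph{not} a primary point insertion and does not restrict to a single component of the degenerate fiber: under the degeneration formula the Chern character of the universal sheaf distributes nontrivially across the pieces and couples to the relative boundary. The delicate task is to show that this distribution produces only finitely many relative descendent terms, each again rational, so that the reduction closes up. I expect the resolution to require careful bookkeeping of $\ch(\FF)$ across the normal-crossing divisor, splitting it into contributions localized on each component plus corrections supported on the relative divisor, with the rationality of the $1$-leg relative descendent series as the essential input that makes the induction on the number of legs terminate.
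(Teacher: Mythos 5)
Your opening moves—equivariant lifting of the $\gamma_j$, capped localization organizing the sum into capped vertex, edge, and gluing factors, and taking 1-leg rationality from the local-curves paper as the base input—coincide with the paper's framework. The genuine gap is in the reduction step, which is the actual content of the paper. You propose to degenerate the 2- or 3-leg vertex geometry so the legs separate onto distinct components and to apply the degeneration formula, and you correctly identify the fatal difficulty yourself: $\ch_{2+i}(\FF)$ does not localize to one component of the degenerate fiber, and you leave its ``careful bookkeeping'' across the relative divisor as an unresolved hope. The paper never confronts that problem, because it does not split the vertex by degeneration at all. Instead it derives \emph{linear relations}: it writes down $\T$-equivariant descendent integrals on compact geometries that vanish identically—on $\bF_2\times\Pp/\bD_\infty$ when $|\alpha|<|\lambda|$ (vanishing by excess dimension, and in the dimension-0 case by the holomorphic symplectic vanishing of the $\bA_1$ factor under the specialization $s_1+s_2=0$), and on absolute $\bF_k\times\Pp$ with $k>3|\alpha|+3|\lambda|+3|\mu|$ when $|\alpha|\geq|\lambda|$ (pure dimension vanishing)—and then expands these zero partition functions by capped localization. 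All descendent insertions sit at fixed-point, line, or divisor classes over $0\in\Pp$, away from the relative divisor, so the distribution problem you worry about simply never arises.

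What is then missing from your proposal is the linear algebra that makes these relations determine the unknown vertices. Each vanishing relation involves \emph{all} capped vertices $\bC(\alpha\,|\,\lambda,\widehat{\mu},\emptyset)$ with $|\widehat{\mu}|=|\mu|$ (and analogously all $\widehat{\lambda},\widehat{\mu},\widehat{\nu}$ in the 3-leg case), so one must prove the system has maximal rank $\mathcal{P}(|\mu|)$ (resp.\ products of such) as the auxiliary partitions $\mu',\lambda',\nu'$ vary. This rests on invertibility of the capped edge matrices, on rank statements for no-descendent capped 2-leg vertices from the GW/DT toric paper, and—crucially for the Hirzebruch case—on the relative/descendent correspondence: the matrix $\bC(\alpha\,|\,\lambda,\emptyset,\emptyset)$, with $|\alpha|\leq d-1$ and $|\lambda|=d$, has rank $\mathcal{P}(d)$ because the classes $\tau_1,\dots,\tau_{d-1}$ generate $H^*(\Hilb(\C^2,d),\mathbb{Q})$ by Ellingsrud--Str\"omme. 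Note also that the paper's induction is on the lexicographic order of $(|\alpha|,|\mu|,|\lambda|)$ (resp.\ $(|\alpha|,|\nu|,|\mu|,|\lambda|)$), not on the number of legs: the Hirzebruch relations trade descendent weight against relative conditions, and an induction on legs alone, as you propose, would not terminate in the regime $|\alpha|\geq|\lambda|$. Without the vanishing-relation mechanism and the maximal-rank statements, your claim that the higher-leg series is ``a finite combination, with coefficients rational in $q$, of products of 1-leg series'' is unsupported.
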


If only primary field insertions $\tau_0(\gamma)$ appear, 
Conjecture \ref{111} is known for
toric $X$ by \cite{moop, mpt} and for Calabi-Yau $X$ by
\cite{bridge,toda} together with \cite{joy}.
In case $X$ is a local curve, Conjecture \ref{111} has been
proven for descendent insertions $\tau_{>0}(\gamma)$ in \cite{part1}.

Let $\T$ be a 3-dimensional algebraic torus acting on a
nonsingular toric 3-fold $X$.{\footnote{$X$ need not be compact. In the
open case, the stable pairs invariants are defined by $\T$-equivariant
residues so long as the $\T$-fixed locus $X^\T\subset X$ is
compact.}} Let
$s_1,s_2,s_3 \in H^*_{\T}(\bullet)$ be the first  Chern classes
of the standard representations of the three factors of $\T$.
The $\T$-equivariant stable pairs invariants of $X$
take values in $\Q(s_1,s_2,s_3)$. Let
$$\ZZ_{\beta}^{X}\Big(   \prod_{j=1}^k \tau_{i_j}(\gamma_{j})
\Big)^\T\in \mathbb{Q}(s_1,s_2,s_3)((q))$$
be the $\T$-equivariant partition function with  $\gamma_j \in H^*_\T(X,\mathbb{Q})$.
The main result of the present paper is the proof of a stronger
$\T$-equivariant version of
 Conjecture 1 in the toric case.

\begin{Theorem}
\label{nnn} Let 
$X$ be a nonsingular toric 3-fold. The partition function
$\ZZ_{\beta}^{X}\big(   \prod_{j=1}^k \tau_{i_j}(\gamma_{j})
\big)^\T$ is the 
Laurent expansion in $q$ of a rational function in the field 
$\mathbb{Q}(q,s_1,s_2,s_3)$.
\end{Theorem}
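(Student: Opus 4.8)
The plan is to compute $\ZZ_\beta^X\big(\prod_j \tau_{i_j}(\gamma_j)\big)^\T$ by $\T$-equivariant virtual localization applied to the moduli spaces $P_n(X,\beta)$. Since $X$ is toric, the $\T$-fixed loci are supported over the $\T$-fixed points and $\T$-invariant lines of $X$ and are built from Hilbert schemes of points and their thickenings along the coordinate axes. Organizing these contributions over the toric web of $X$ rewrites the partition function as a sum over assignments of partitions to the $\T$-invariant $\Pp$'s (the edges) of a product of \emph{vertex} factors $\bW$ and \emph{edge} factors $\bE$. Because the insertions $\gamma_j \in H^*_\T(X,\Q)$ and the Chern characters $\ch_{2+i}(\FF)$ are, after localization, supported at the $\T$-fixed points, each descendent distributes onto the vertices. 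The edge factors $\bE$ are elementary and manifestly rational in $q$ and the $s_i$, so rationality of $\ZZ$ reduces to rationality of the descendent vertex $\bW$ — with the one caveat that the sum over edge partitions is infinite, which I would control by passing to a \emph{capped} version of the vertex.

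To tame this infinite sum and obtain a genuinely geometric object, I would cap each leg of the bare vertex by gluing in the relative geometry of $\Pp \times \C^2$ along a fiber, producing the capped descendent vertex. The standard gluing formalism expresses $\bW$ in terms of the capped vertex and the inverse capped edge and conversely, and this relationship is itself rational; hence rationality of $\ZZ$ is equivalent to rationality of the capped descendent vertex carrying $0$, $1$, $2$, or $3$ nontrivial legs. For primary insertions $\tau_0(\gamma)$ this rationality already follows from the toric results of \cite{moop, mpt}, so the genuinely new content is the case of positive descendents on the vertex.

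The $1$-leg capped descendent vertex is, by construction, the $\T$-equivariant stable pairs descendent partition function of a local curve over $\Pp$ relative to fibers, and is therefore rational by the local-curve descendent theorem of \cite{part1}. The heart of the argument is then a \emph{geometric reduction} of the $2$- and $3$-leg descendent vertices to this $1$-leg case: I would realize the multi-leg vertex inside a toric $3$-fold admitting a degeneration that separates the coordinate axes carrying the nontrivial legs into distinct local-curve components, and track the descendent through the degeneration. Since $\ch_{2+i}(\FF)$ need not split cleanly — it can contribute across several components of the degenerate fiber — I would first rewrite each descendent in a normalized form adapted to the separating divisor, so that after degeneration every piece carries descendents on at most one leg, while the correction terms are again built from strictly lower-leg descendent vertices.

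The main obstacle will be exactly this descendent splitting: controlling the behavior of $\ch_{2+i}(\FF)$ under the degeneration and verifying that the correction terms reassemble into lower-leg capped descendent vertices, so that an induction on the number of nontrivial legs closes against the $1$-leg base case. Once the $3$-leg (and $2$-leg) vertices are expressed rationally in terms of $1$-leg and primary data, reversing the capping and gluing of the second step and summing the graph contributions — each now a rational function in $q$ and the $s_i$ — yields that $\ZZ_\beta^X\big(\prod_j \tau_{i_j}(\gamma_j)\big)^\T$ is the Laurent expansion of an element of $\Q(q,s_1,s_2,s_3)$, as claimed.
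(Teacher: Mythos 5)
Your scaffolding agrees with the paper: capped localization over the toric polytope with rational capped edge and gluing factors, reduction of Theorem \ref{nnn} to rationality of the capped descendent vertex (Theorem \ref{ttt}), the $\alpha=\emptyset$ case from \cite{moop,mpt}, and the 1-leg case from \cite{part1} as the base of an induction on legs. But the engine of your reduction --- degenerating the geometry so that ``the coordinate axes carrying the nontrivial legs'' land on distinct local-curve components, then splitting the descendents --- fails, and it is not what the paper does. The three legs of a capped vertex all meet at the single fixed point $\mathsf{p}$; the vertex is the atomic residue left after localization and degeneration have already been applied, and no $\T$-equivariant degeneration of a toric 3-fold reduces the valence of a vertex: any degenerate fiber still contains a 3-valent fixed point carrying curve class on all three incident edges. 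Worse, the ``normalized form'' you invoke so that $\ch_{2+i}(\FF)$ splits cleanly across the degenerate fiber is precisely the hard missing ingredient --- no such splitting statement for stable-pairs descendents is available, and your proposal gives no construction of it. The paper is engineered to never need one: in all of its relations the descendents are inserted at $\T$-fixed points or $\T$-invariant cycles over $0\in\Pp$, away from any relative divisor, so in capped localization they simply distribute onto vertices with no splitting issue.

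What the paper actually does in place of your third and fourth steps is \emph{vanishing plus linear algebra}. It writes down $\T$-equivariant descendent integrals that vanish identically: for $|\alpha|\ge|\lambda|$ by pure dimension count on $\bF_k\times\Pp$ with $k$ large (Propositions \ref{ggh22} and \ref{ggh223}), and for $|\alpha|<|\lambda|$ by the holomorphic-symplectic vanishing at $s_1+s_2=0$ on $\bA_1\times\Pp$ and $\bA_2\times\Pp$ relative geometries (Proposition \ref{ggh2}). Expanding these vanishing series by capped localization, and inducting on the lexicographic order of $(|\alpha|,|\nu|,|\mu|,|\lambda|)$, the unknown 2- and 3-leg vertices $\bC(\alpha|\widehat{\lambda},\widehat{\mu},\widehat{\nu})$ with fixed leg sizes appear \emph{linearly} against inductively known quantities; one then solves for them because the system has maximal rank. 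That closure step requires inputs entirely absent from your plan: invertibility of the capped edge matrices and of the no-descendent 2-leg vertex matrix (Lemma 9 of \cite{moop}), and the relative/descendent correspondence of Proposition \ref{gbb5}, proved via the Ellingsrud--Str\"omme generation of $H^*(\Hilb(\com^2,d),\Q)$ by tautological classes together with the nondegenerate equivariant pairing (or the triangularity argument with the evaluation \eqref{appp}). Without a source of relations (the vanishing integrals) and without these rank statements, your induction cannot close, and the degeneration-plus-splitting substitute stalls at an unproven descendent splitting formula.
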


\subsection{Capped descendent vertex}
Capped vertices were introduced in \cite{moop} to study 
the Gromov-Witten and Donaldson-Thomas theories of toric 3-folds.
By the same construction,
we define here capped stable pairs vertices with descendent
insertions. The 1-leg case was already treated in \cite{part1}.

Let $\T$ be a 3-dimensional algebraic torus, and let
$s_1,s_2,s_2 \in H^*_{\T}(\bullet)$ be first  Chern classes
of the standard representations of the three factors of $\T$.
Let $\T$ act diagonally on $\Pp\times \Pp \times \Pp$,
$$(\xi_1,\xi_2,\xi_3) \cdot ([x_1,y_1],[x_2,y_2],[x_3,y_3]) =
([x_1,\xi_1 y_1],[x_2,\xi_2 y_2],[x_3,\xi_3 y_3])\ .$$
Let $0,\infty \in \Pp$ be the points $[1,0]$ and $[0,1]$ respectively.
The tangent weights{\footnote{Our sign conventions here
follow \cite{moop} and disagree with \cite{part1}.
Since we will not require explicit vertex calculations here,
the sign conventions will not play a significant role.}} of $\T$ at the point
$$\mathsf{p}=(0,0,0) \in \Pp \times \Pp \times \Pp$$
are $s_1$, $s_2$, and $s_3$.

Let $U\subset \Pp\times \Pp \times \Pp$ be the $\T$-invariant 3-fold obtained
by removing the three $\T$-invariant lines 
$$
L_1,L_2,L_3 \subset \Pp \times \Pp \times \Pp
$$
passing through the point $(\infty,\infty,\infty)$,
$$U = \Pp \times \Pp \times \Pp \ \setminus \ \cup_{i=1}^3 L_i.$$ 
Let $D_i \subset U$
be the divisor with $i^{th}$ coordinate $\infty$.
For $i\neq j$, the divisors ${D}_i$ and $D_j$
are disjoint in $U$.

The capped descendent vertex is the stable pairs
partition function of $U$ with integrand
$$\tau_{\alpha_1}(\mathsf{p}) \ldots \tau_{\alpha_{\ell}}(\mathsf{p})$$ and 
free relative conditions
imposed at the divisors ${D}_i$. 
While the relative geometry $U/\cup_i D_i$
is not compact, the moduli spaces
 $P_n(U/\cup_i D_i,\beta)$
have compact $\T$-fixed loci.
The stable pairs invariants of $U/\cup_i D_i$ are
well-defined by $\T$-equivariant residues.
In the localization
formula for the reduced theories of 
 $U/\cup_i D_i$,
 nonzero degrees can occur {\em only} on the edges meeting
the origin $\mathsf{p}\in U$.

We
denote the capped stable pair descendent vertex by
\begin{eqnarray}\nonumber
\bC( \alpha  | \lambda,\mu,\nu ) & = & \mathsf{Z}
(U/\cup_i D_i, \prod_{i=1}^\ell \tau_{\alpha_i}(\mathsf{p})\ | \ \lambda,\mu,\nu)^\T \\
&
= &\sum_{n}  \label{njp}
\left\langle \prod_{i=1}^\ell \tau_{\alpha_i}(\mathsf{p}) 
\right\rangle_{\!n,\lambda,\mu,\nu}^{\T}q^n.
\end{eqnarray}
where the partition
 $\alpha$ specifies the descendent integrand and the partitions
$\lambda,\mu,\nu$ denote relative conditions imposed at $D_1,D_2,D_3$
in the Nakajima basis.
The curve class $\beta$ in \eqref{njp} is determined
by the relative conditions: $\beta$ is the sum of 
the three axes passing through $\mathsf{p}\in U$
with coefficients $|\lambda|$, $|\mu|$, and $|\nu|$ respectively. 
The superscript $\T$ after the bracket denotes
$\T$-equivariant integration on
$P_n(U/\cup_i D_i,\beta)$.

Since the parts of the partition $\alpha$ are positive,
our capped descendent vertices have no $\tau_0(\mathsf{p})$
insertions. For a stable pair $(F,s)$ on $X$, a direct
calculation shows
$$\text{ch}_2(F) \cap [X]= \beta\in H_2(X,\mathbb{Z}) .$$
Hence, $\tau_0(\mathsf{p})$ acts simply as the scalar
$$|\lambda|s_2s_3+ s_1|\mu|s_3+ s_1s_2|\nu|\ .$$
Resticting $\alpha$ to be a partition is therefore no loss.

If $\alpha=\emptyset$, there are no descendents and
 our capped descendent vertex reduces to
the capped vertex of \cite{moop}.
The basic $\Sigma_3$-action permuting the axes of $U$ implies
an $\Sigma_3$-symmetry of the capped descendent vertex.
The 2-leg and 1-leg vertices are the restrictions
$$\bC( \alpha  | \lambda,\mu,\emptyset ), \ \ \ \ \bC( \alpha  | 
\lambda,\emptyset,\emptyset ) $$
respectively.
For stable pairs, we always require $|\lambda|+|\mu|+|\nu|>0$.
However, we follow the conventions
$$\bC( \emptyset  \ |\ \emptyset,\emptyset,\emptyset ) = 1, \ \ \ \
\bC( \alpha \neq \emptyset\  |\ \emptyset,\emptyset,\emptyset ) = 0,
$$
for convenience in formulas.

We will prove Theorem \ref{nnn}
by a refined rationality result for the capped descendent vertex.
\begin{Theorem}
\label{ttt} 
For all partitions  $\alpha,\lambda,\mu,\nu$, the vertex
$\bC( \alpha  | \lambda,\mu,\nu )$
is the 
Laurent expansion in $q$ of a rational function in the field 
$\mathbb{Q}(q,s_1,s_2,s_3)$.
\end{Theorem}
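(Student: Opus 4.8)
The plan is to prove rationality of the capped descendent vertex by reducing the multi-leg case to the already-established 1-leg case from \cite{part1}. The strategy exploits the capped nature of the vertex: because the geometry $U/\cup_i D_i$ is capped, the theory should behave well under degeneration and localization arguments, and the key is to isolate the descendent insertions at the single point $\mathsf{p}$.

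\medskip

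\emph{Step 1: Degeneration to separate the legs.} I would first use the degeneration formula for stable pairs to break the three-leg geometry into pieces, each carrying descendent insertions along only one axis. The idea is to degenerate $U$ so that the descendent insertions at $\mathsf{p}$, together with each relative divisor $D_i$, are distributed onto separate components. Since the partitions $\lambda,\mu,\nu$ live on disjoint divisors $D_1,D_2,D_3$ (which are pairwise disjoint in $U$), and since nonzero degrees occur only on the edges through $\mathsf{p}$, the degeneration formula will express $\bC(\alpha|\lambda,\mu,\nu)$ as a finite sum of products of simpler capped descendent vertices glued along relative conditions. Rationality is preserved under such gluing provided each factor is rational, because the gluing sums are finite and the relative partition functions enter as matrix entries with rational dependence on $q$ and $s_i$.

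\medskip

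\emph{Step 2: Reduce descendent insertions to the 1-leg vertex.} The heart of the argument is to show that after the geometric reduction, every term involves only 1-leg capped descendent vertices $\bC(\alpha'|\lambda',\emptyset,\emptyset)$, whose rationality is exactly the result of \cite{part1}, together with the capped vertices without descendents $\bC(\emptyset|\lambda,\mu,\nu)$, whose rationality is the capped vertex rationality of \cite{moop}. The descendent insertions $\tau_{\alpha_i}(\mathsf{p})$ are supported at the single point $\mathsf{p}$, so after degeneration they can be collected onto a component meeting only one leg. I would need to verify that the $\Sigma_3$-symmetry and the free relative conditions are compatible with this distribution, and that the finitely many gluing terms each factor through geometries for which rationality is known.

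\medskip

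\emph{The main obstacle} will be controlling the interaction between the descendent insertions and the relative conditions during degeneration: one must ensure that the descendents, which involve $\text{ch}_{2+i}(\FF)$ of the universal sheaf and hence are genuinely nonlocal operations on the moduli space, can be legitimately separated onto single-leg components without generating terms that mix descendents across multiple legs in an uncontrolled way. Handling the bookkeeping of how $\text{ch}_{2+i}(\FF)$ splits under the degeneration—and confirming that the resulting pieces remain expressible through the 1-leg descendent theory—is where the real work lies. Once this reduction is secured, rationality follows because the class of rational functions in $\mathbb{Q}(q,s_1,s_2,s_3)$ is closed under the finite sums and products appearing in the degeneration formula.
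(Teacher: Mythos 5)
There is a genuine gap, and it is located exactly at your Step 1. No degeneration of $U/\cup_i D_i$ exists that distributes the three legs onto separate components each meeting only one relative divisor: the three $\T$-invariant axes carrying the curve class all pass through the single point $\mathsf{p}$, and any $\T$-compatible degeneration of the capped geometry still contains a full 2- or 3-leg vertex somewhere in its special fiber. The multi-leg capped vertex is an irreducible local object, not a product of 1-leg pieces glued along relative divisors. This is visible already in the descendent-free case: in \cite{moop} the rationality of $\bC(\emptyset|\lambda,\mu,\nu)$ is \emph{not} obtained by factoring the vertex through a degeneration, but by deriving linear relations from auxiliary $\bA_n$-surface geometries and proving the relevant matrices have maximal rank. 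Consequently your claimed conclusion of Step 2 --- that every term involves only 1-leg descendent vertices and descendent-free capped vertices --- is not something the degeneration formula can deliver, and the obstacle you flag (the splitting of $\ch_{2+i}(\FF)$ across components) is not the binding one: descendents of point classes restrict straightforwardly in the degeneration formula once the point lies in a fixed component. The failure is geometric, not bookkeeping.

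What the paper actually does is an induction on the lexicographic order of $(|\alpha|,|\nu|,|\mu|,|\lambda|)$ in which the multi-leg vertices are never expressed directly as products, but are instead \emph{solved for} from invertible linear systems of relations. The relations come from integrals that vanish for geometric reasons: when $|\alpha|<|\lambda|$, one integrates descendents against relative conditions on $\bF_2\times\Pp/\bD_\infty$ (an $\bA_2$-compactification $\bF\times\Pp/\bD_\infty$ in the 3-leg case), where vanishing follows from a dimension count together with the specialization $s_1+s_2=0$ and the holomorphic symplectic structure of $\bA_1$; when $|\alpha|\geq|\lambda|$, one uses the compact toric 3-folds $\bF_k\times\Pp$ with $k$ large, where vanishing is by pure dimension count. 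Capped localization of these vanishing series yields relations whose only not-yet-known (``principal'') unknowns are the vertices $\bC(\alpha|\lambda,\widehat{\mu},\widehat{\nu})$ with fixed leg sizes, and the systems have maximal rank: by Lemma 9 of \cite{moop} for the descendent-free 2-leg matrices, and by the relative/descendent correspondence (Proposition \ref{gbb5}, resting on the Ellingsrud--Str\"omme generation of $H^*(\Hilb(\com^2,d),\mathbb{Q})$ by tautological classes) for the capped 1-leg descendent matrices $\bC(\alpha'|\lambda',\emptyset,\emptyset)$. If you want to salvage your outline, the maximal-rank linear-algebra mechanism --- vanishing relations plus invertibility --- is the missing idea you would need to add; a direct leg-separating degeneration cannot be made to work.
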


The proof of Theorem \ref{ttt} uses two geometric constraints
to reduce the capped descendent vertex 
 $\bC(\alpha  | \lambda,\mu,\nu )$ to
the 1-leg case studied in \cite{part1}.
The first involves the $\bA_n$-surfaces as in \cite{moop}.
The second, for large partitions $\alpha$, involves
Hirzebruch surfaces and the relative/descendent
correspondence in the 1-leg case.
The final outcome is an effective computation of
$\bC(\alpha | \lambda,\mu,\nu )$
in terms of the capped 1-leg descendent vertex.

While we are interested here in the theory of stable pairs,
the geometric constraints used in the proof of Theorem \ref{ttt}
are equally valid for Gromov-Witten theory and Donaldson-Thomas
theory. In the latter theories, the constraints determine
the capped 2- and 3-leg descendent vertices in terms of
capped 1-leg descendent vertices.{\footnote{
The Donaldson-Thomas constraints also involve the 0-leg
descendent vertex.
The 0-leg descendent vertex
concerns degree 0 contributions which may be removed
in Gromov-Witten theory by requiring stable maps to have
no connected components contracted to a point.
The 0-leg descendent vertex is
absent in the theory of stable pairs by definition.}} 
However,
rationality does
{\em not} hold for the capped descendent vertices in the Gromov-Witten
or Donaldson-Thomas cases.

The Gromov-Witten, Donaldson-Thomas, and stable pairs
descendent theories are all conjectured to be equivalent
\cite{MNOP2,pt,pt2}. The geometric constraints studied
here show the differences between the three
descendent theories in the toric case should be viewed
as occuring in the 1-leg
descendent vertex.
A descendent correspondence, rooted in 1-leg geometry,
is proven for the
$\T$-equivariant Gromov-Witten and stable pairs
theories of all nonsingular toric 3-folds in \cite{part4}.
By the rationality result of Theorem \ref{nnn},
the MNOP \cite{MNOP1} variable change 
$$-q=e^{iu}$$
is well-defined for the stable pairs partition function.
Rationality plays a crucial role in the study of the 
correspondence in \cite{part4}.


\subsection{Log Calabi-Yau geometries}
Let $X$ be a nonsingular projective 3-fold and let 
$S\subset X$ be a nonsingular anti-canonical divisor
isomorphic to a $K3$ surface.
The pair $(X,S)$ is log Calabi-Yau,
$$K_X(S) = \OO_X\ . $$
The most basic example is $(\mathbf{P}^3,Q)$ where $Q\subset \mathbf{P}^3$
is a nonsingular quartic $K3$ surface.

There is a natural notion of pure counting in log Calabi-Yau
geometries $(X,S)$.
Let $\beta\in H_2(X,\mathbb{Z})$ be a curve class, and let
$$d= \int_{\beta}[S] \ \in \mathbb{Z}$$
be the intersection number.
Let $\text{Hilb}(S,d)$ denote the Hilbert scheme of
$d$ points on $S$, and
let $$\mathcal{L}_{\beta} \in H^{2d}(\text{Hilb}(S,d),
\mathbb{Q})$$ be a middle dimensional class.
The analogue of the partition function of Calabi-Yau invariants
 in the log Calabi-Yau situation is
\begin{equation}
\mathsf{Z}^{X/S}_\beta
\Big( 1  \Big|\  \mathcal{L}_{\beta} \Big) 
= \sum_{n}  \label{njpp}
\Big\langle 1 
\Big\rangle_{\!n, \mathcal{L}_\beta}^{X/S}q^n.
\end{equation}
The partition function \eqref{njpp} counts stable pairs of curve class $\beta$
with the relative condition
determined by the class $\mathcal{L}_\beta$.
As a consequence of Theorem \ref{nnn}, we obtain the following 
result.

\begin{Theorem}
\label{fff} Let 
$X$ be a nonsingular projective toric 3-fold
with an anti-canonical $K3$ section $S$. The partition function
$\mathsf{Z}_\beta^{X/S}
\Big(1  \Big|\  \mathcal{L}_{\beta} \Big)$ 
is the 
Laurent expansion of a rational function in $q$.
\end{Theorem}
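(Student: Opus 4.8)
The plan is to deduce the rationality of $\mathsf{Z}^{X/S}_\beta(1 \,|\, \mathcal{L}_\beta)$ from Theorem \ref{nnn} via the degeneration formula for stable pairs, letting the toric threefold $X$ carry the rationality while the non-toric divisor $S$ is confined to a single local factor. First I would deform $X$ to the normal cone of $S$; the central fiber is $X \cup_S P$, where $P = \mathbf{P}(N_{S/X} \oplus \OO_S)$ is the projective completion of the normal bundle, glued to $X$ along the zero section $S_0 = S$. Since $(X,S)$ is log Calabi--Yau, $N_{S/X} = \OO_S(S)$, and the relative obstruction theory is arranged so that a middle-dimensional class $\mathcal{L}_\beta$ on $\Hilb(S,d)$ is the correct codimension for a nonzero integral. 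The degeneration formula then reads, schematically,
$$\mathsf{Z}^{X}_\beta\big(\prod_j \tau_{i_j}(\gamma_j)\big) = \sum \mathsf{Z}^{X/S}_{\beta_1}(\cdots \,|\, \eta)\, g^{\eta \eta^\vee}\, \mathsf{Z}^{P/S_0}_{\beta_2}(\cdots \,|\, \eta^\vee),$$
the sum over splittings $\beta = \beta_1 + \beta_2$ and over a basis $\{\eta\}$ of $H^*(\Hilb(S,d))$ in the Nakajima basis, with $g$ the gluing pairing.

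The left-hand side is the Laurent expansion of a rational function of $q$ by Theorem \ref{nnn}, after specializing $s_1,s_2,s_3 \to 0$; this is legitimate because $X$ is projective, so the equivariant invariants are regular at $s_i = 0$ and the limit computes the honest non-equivariant series. The second step is to invert the formula for the relative factor. Reading it as a pairing against the $P$-block, I would check that the operator on $H^*(\Hilb(S,d))$ defined by $\mathsf{Z}^{P/S_0}$ is invertible over $\Q(q)$: its leading term, from $\beta_2 = 0$, is the identity (the empty matching), and for fixed $\beta$ only finitely many splittings contribute, so the system is triangular. Solving it expresses each $\mathsf{Z}^{X/S}_\beta(1 \,|\, \eta)$ --- and hence $\mathsf{Z}^{X/S}_\beta(1 \,|\, \mathcal{L}_\beta)$, by linearity in the insertion --- as a $\Q(q)$-combination of the rational series $\mathsf{Z}^X_\beta(\prod_j \tau_{i_j}(\gamma_j))$.

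The main obstacle is the $P$-block itself. Because $S$ is a $K3$ surface and not toric, $\mathsf{Z}^{P/S_0}$ lies outside the scope of Theorem \ref{nnn} and must be controlled separately: what is needed is the rationality in $q$ of the local stable pairs theory of the $\mathbf{P}^1$-bundle $P \to S$, equivalently of the operators on $H^*(\Hilb(S,d))$ that it defines. Here I would use the fiberwise $\C^*$-action on $P$ to reduce, by localization, to integrals over $S$ and $\Hilb(S,d)$, and then invoke the known structure of the equivariant theory of $K3$ surfaces and their Hilbert schemes (cf.\ \cite{mpt}) --- the equivariant normal-bundle weights break the holomorphic-symplectic vanishing and leave a rational series. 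A secondary point is that $H^2(S)$ is $22$-dimensional and is not spanned by restrictions from $X$; since the invariant is linear in the cohomology weights of $\mathcal{L}_\beta$, this causes no trouble once the $P$-operator is shown rational on all of $H^*(\Hilb(S,d))$, but it does mean the insertion must be treated intrinsically on $S$ rather than pulled back from $X$. Once this $K3$-local rationality is established, the degeneration bookkeeping and triangular inversion are routine.
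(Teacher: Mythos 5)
Your global skeleton is the paper's: degenerate $X$ to the normal cone of $S$, push the descendent insertions (supported intrinsically on $S$) to the $\mathbf{P}(\OO_S\oplus N)$ component, use Theorem \ref{nnn} with $s_i\to 0$ for the absolute side, and invert the $\mathbf{P}(\OO_S\oplus N)$-block of the degeneration formula. But the proposal has a genuine gap exactly at the step you yourself flag as ``the main obstacle,'' and your two suggested fixes for it do not work. First, the claimed source of invertibility --- ``the leading term, from $\beta_2=0$, is the identity'' --- is false: the relative condition along the gluing divisor has positive degree $d$, so any contributing class $\beta_2$ on $\mathbf{P}(\OO_S\oplus N)$ must meet that divisor in degree $d$; the splitting $\beta_2=0$ contributes nothing, and finiteness of splittings does not by itself yield triangularity of the operator in the Nakajima index $\eta$. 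Second, your proposed control of the $P$-block by fiberwise $\C^*$-localization plus ``the known structure of the equivariant theory of $K3$ surfaces'' is a hand-wave that, as you note, \emph{destroys} the holomorphic-symplectic vanishing; you would then need rationality of a full non-toric equivariant relative theory (all curve classes in $S_0$, all Euler characteristics), which is not available from Theorem \ref{nnn} or any cited result and is not easier than the original problem.

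The paper's actual mechanism is the opposite of breaking the vanishing: it exploits it non-equivariantly. Using the trivialization $\omega_S\cong\OO_S$ and infinitesimal translation along the vertical vector field, one builds a trivial quotient of the obstruction theory (Section \ref{g669}, via the cosection technique of \cite{cosec,mpt}), which kills \emph{all} stable pairs invariants of $\mathbf{P}(\OO_S\oplus N)/S_\infty$ except those of minimal Euler characteristic in pure fiber classes. Consequently the $P$-block collapses to a single $q^d$-coefficient matrix equal to the classical pairing
$$\int_{\Hilb(S,d)} \tau[\mu]\cdot \nu$$
between descendent classes and Nakajima basis elements, and Proposition \ref{dgbb5} proves this matrix invertible by a dimension count in $\mathrm{Sym}^d(S)$ (upper triangularity with respect to a length ordering) together with the diagonal evaluation \eqref{appp}. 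With this vanishing in hand, the splitting of $\beta$ is essentially forced, the system over weighted partitions of $d$ is square and invertible over $\Q$, and the ``routine bookkeeping'' you defer to really is routine. Without it, your inversion step cannot be carried out as written.
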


The Hilbert scheme $\text{Hilb}(S,d)$ is well-known
to carry a canonical holomorphic sympectic form.
If $\mathcal{L}_\beta$ is obtained from a Lagrangian cycle,
a very natural approach to Theorem \ref{fff}, following
the sucessful arguments for rationality in Calabi-Yau cases, 
is to study Behrend
functions and wall-crossing for sheaf counting invariants 
associated to relative geometries. 
D.~ Maulik and R. P. Thomas have recently 
initiated a study of Behrend functions
for log Calabi-Yau geometries.

In fact, our proof of Theorem \ref{fff} yields the rationality of
all partition functions for the relative geometry $(X,S)$
with arbitrary descendent insertions and non-Lagrangian
boundary conditions.

\subsection{Further directions}
The techniques of \cite{mptop} allow the use of degeneration
to exchange relative conditions and descendent invariants.
Theorem \ref{fff} is an easy application of the ideas of \cite{mptop}
to the theory of stable pairs. New results including 
rationality of the stable pairs partition functions
in many non-toric settings will be established in
a sequel. 

The arguments of \cite{mptop} are for Gromov-Witten
theory where target dimension reduction plays a basic role. For stable pairs,
several aspects have to be redone without dimension reduction.

\subsection{Plan of the paper}
We start with a review of capped localization in Section \ref{cl}.
The full capped localization formula for descendents
is given in Section \ref{exxx}. The capped 2-leg descendent
vertex is studied in Section \ref{2ldv}. The first half of the
reduction to 1-leg uses results about $\bA_n$-surface geometries in
Section \ref{jj3}. The second half of the reduction uses 
constraints obtained from Hirzebruch surfaces
in Section \ref{jj4}. The equivalence between relative conditions
and stationary descendents for local curves (used before in
\cite{lcdt}) is reviewed in Section \ref{jj5}.
The proofs of Theorems \ref{nnn} and \ref{ttt} are
completed in Section \ref{3ldv} with the analysis of 
the capped 3-leg descendent vertex.
The proof of Theorem \ref{fff} for log Calabi-Yau geometries
is presented in Section \ref{lcy}.

\subsection{Acknowledgements}
Discussions with J. Bryan, D. Maulik, A. Oblomkov, A. Okounkov, and
R.~P. Thomas 
about the stable pairs vertex, self-dual obstruction theories,
and rationality
played an important role. 
The study of descendents
for 3-fold sheaf theories in \cite{MNOP2,pt2} 
motivated several 
aspects of the paper. 
 
R.P. was partially supported by NSF grants DMS-0500187
and DMS-1001154.
A.P. was supported by a NDSEG graduate fellowship.
The paper was completed in the fall of 2010
while visiting the 
Instituto Superior T\'ecnico in Lisbon where
R.P. was supported by a Marie Curie fellowship and
a grant from the Gulbenkian foundation.

\section{Capped localization}
\label{cl}

\subsection{Toric geometry}
Let $X$ be a nonsingular toric $3$-fold.
Virtual localization with respect to the
action of the full 3-dimensional torus $\T$ reduces all stable pairs
invariants
of $X$
to local contributions of the vertices and edges of the 
associated toric polytope. 
However, the standard
constituent pieces of the localization formula
yield transcendental functions.  
We will use the regrouped localization procedure 
introduced in \cite{moop} 
with capped vertex and edge contributions.  
The capped vertex and edge terms are equivalent 
building blocks for global toric calculations, 
but are much better behaved.

Let $\Delta$ denote the polytope associated to
$X$. The vertices of $\Delta$ are in bijection
with $\T$-fixed points $X^\T$.
The edges $e$ correspond
to $\T$-invariant curves $$C_e\subset X.$$ The three edges
incident to any vertex carry canonical $\T$-weights ---
the tangent weights of the torus action.

We will consider both compact and noncompact toric
varieties $X$. In the latter case, edges 
may be compact or noncompact.
Every compact edge is incident to two vertices.

\subsection{Capping}
Capped localization expresses the 
$\T$-equivariant stable pairs descendents 
of $X$ as a sum of capped descendent vertex and capped edge
data.

A half-edge $h=(e,v)$
 is a compact edge $e$ together with the
choice of an incident vertex $v$.
A partition assignment
$$h \mapsto \lambda(h)$$
to half-edges is {\em balanced} if the equality
$$|\lambda(e,v)| = |\lambda(e,v')|$$
always holds 
for the two halfs of  $e$.
For a balanced assignment, let $$|e|=|\lambda(e,v)|=|\lambda(e,v')|$$
denote the {\em edge degree}.

The outermost sum in the capped localization formula
runs over all balanced 
assignments of partitions $\lambda(h)$ to
the half-edges $h$ of $\Delta$ satisfying
\begin{equation}\label{ddfff}
\beta = \sum_e |e|\cdot \left[C_e\right] \in H_2(X,\mathbb{Z})\,.
\end{equation}
Such a partition assignment will be called a \emph{capped
marking} of
$\Delta$.
The weight of each capped marking in the localization sum for the
stable pairs descendent partition function equals the product
of three factors:
\begin{enumerate}
\item[(i)] capped descendent vertex contributions,
\item[(ii)] capped edge contributions,
\item[(iii)] gluing terms.
\end{enumerate}

Each vertex determines up to three half-edges specifying the
partitions for the capped vertex.
Each compact edge determines two half-edges specifying
the partitions of the capped edge.
The capped edge contributions (ii) 
and gluing terms (iii) here are {\em exactly} the same as
for the the capped localization formula in \cite{moop}.
Precise formulas are written in Section \ref{exxx}.

The capped localization formula
is easily derived from the standard localization formula (with
roots in \cite{GraberP,MNOP1}).
Indeed, the capped objects are obtained from the
uncapped objects by  rubber integral{\footnote{
Rubber integrals $\langle \lambda \ |\ \frac{1}{1-\psi_\infty} \ |  
\ \mu \rangle ^\sim$ 
arise in the localization formulas for relative
geometries.
See
\cite{part1} for a discussion.}} factors.
The rubber integrals cancel in pairs in capped localization
to yield standard localization.

\label{bgty}

\subsection{Formulas} \label{exxx}

The $\T$-equivariant cohomology of $X$ is generated (after localization)
by the classes of the $\T$-fixed points $X^\T \subset X$.
Let $\alpha$ be a partition with parts $\alpha_1, \ldots, \alpha_\ell$,
and let 
$$\sigma: \{1,\ldots, \ell\} \rightarrow X^\T\ .$$
Let $\mathsf{p}_{\sigma(i)} \in H^6_\T(X,\mathbb{Q})$ denote the
class of the $\T$-fixed point $\sigma(i)$.
We 
consider the 
capped localization formula 
for the $\T$-equivariant stable pairs descendent partition function 
\begin{equation}\label{fq2}
\bZ_\beta\Big(X,\prod_{i=1}^\ell \tau_{\alpha_i}(\mathsf{p}_{\sigma(i)})\Big)
=
\sum_{n}  
\left\langle \prod_{i=1}^\ell \tau_{\alpha_i}(\mathsf{p}_{\sigma(i)})
\right\rangle_{\!n,\beta}^{X}q^n.
\end{equation}

Let $\mathcal{V}$ be the set of vertices of $\Delta$ which
we identify 
with $X^\T$. 
For $v\in 
\mathcal{V}$, let
$\alpha^v$ be the collection of parts $\alpha_i$ of $\alpha$
satisfying $\sigma(i)=v$.
The partition $\alpha^v$ has size bounded by $|\alpha|$.

For $v\in \mathcal{V}$,
let $h^{v}_1, h^v_2, h^v_2$ be the associated half-edges{\footnote{
For simplicity, we assume $X$ is projective so each vertex is
incident to 3 compact edges.}}  
with tangent weights $s^v_1,s^v_2,s^v_3$ respectively.
Let $\Gamma_\beta$ be the set of capped markings
 satisfying the degree condition \eqref{ddfff}.
Each $\Gamma \in \Gamma_\beta$  associates  
a partition $\lambda(h)$ to every half-edge $h$. Let
$$|h| = |\lambda(h)|$$
 denote the half-edge degree.

For each $v\in \mathcal{V}$, the assignments $\sigma$ and
$\Gamma$ determines an evaluation of the capped vertex,
$$\mathsf{C}(v,\sigma,\Gamma) = \mathsf{C}(\alpha^v|\lambda(h^v_1),
\lambda(h^v_2), \lambda(h^v_3))|_{s_1= s^v_1, s_2=s^v_2, s_3=s^v_3}.$$
Let
$h^e_1$ and $h^e_2$ be the half-edges
associated to the edge $e$.
The assignment 
$\Gamma$ also determines an evaluation of the capped edge,
$$\mathsf{E}(e,\Gamma) = \mathsf{E}(\lambda(h^e_1),
 \lambda(h^e_2)).$$
The edge factors and weights are identical to the
corresponding Donaldson-Thomas edge terms in \cite{moop}. 
A gluing factor is specified by $\Gamma$ at each
half-edge $h^v_i\in \mathcal{H}$ by 
$$\mathsf{G}(h^v_i,\Gamma) = 
(-1)^{|h^v_i|-\ell(\lambda(h^v_i))}
\mathfrak{z}(\lambda(h^v_i)) 
\left(\frac{\prod_{j=1}^3 s^v_j}{s^v_i}\right)^{\ell(\lambda(h^v_i))} 
q^{-|h^v_i|}\ $$
where 
 $\zz(\lambda)$ is the order of the
centralizer in the symmetric group of
an element with cycle type $\lambda$.

The capped localization formula for stable pairs can be written
exactly in the form presented in Section \ref{bgty},
\begin{equation*}
\bZ_\beta\Big(X,\prod_{i=1}^\ell \tau_{\alpha_i}(\mathsf{p}_{\sigma(i)})\Big)=
\sum_{\Gamma\in \Gamma_\beta}\
\prod_{v\in \mathcal{V}}\ \prod_{e\in \mathcal{E}}\ \prod_{h\in \mathcal{H}}
\mathsf{C}(v,\sigma,\Gamma) \
\mathsf{E}(e,\Gamma)\
\mathsf{G}(h, \Gamma)
\end{equation*}
where the product is over the sets of vertices $\mathcal{V}$, edges
 $\mathcal{E}$, and half-edges $\mathcal{H}$ 
of the polytope $\Delta$.

The most basic example of capped localization occurs
for the 3-fold total space of
\begin{equation}\label{bb23}
\cO(a) \oplus \cO(b) \rightarrow \Pp.
\end{equation}
The standard localization formula has vertices over
$0,\infty\in \Pp$ and a single 
edge.
To write the answer in terms of capped localization,
we consider a $\T$-equivariant degeneration of  \eqref{bb23} to a chain
$$ (0,0) \cup (a,b) \cup (0,0)$$
of total spaces of bundles over $\Pp$
denoted here by splitting degrees.
The first $(0,0)$-geometry is relative over $\infty\in \Pp$,
the central $(a,b)$-geometry is relative on both sides, and
the last $(0,0)$-geometry is relative over $0\in \Pp$.
The degeneration formula exactly expresses the stable pair theory
 of \eqref{bb23} as capped
localization with 2 capped vertices and a single capped
edge in the middle.

\section{Capped descendent 2-leg vertex} \label{2ldv}

\subsection{Induction strategy}
We will prove Theorem \ref{ttt} for capped 2-leg descendent
vertices by induction.
Using the $\Sigma_3$-symmetry of the vertex, we may assume 2-leg
vertices are of the form
$$\bC(\alpha | \lambda,\mu,\emptyset), \ \ \ \  |\lambda|\geq |\mu|\geq 0 \ . $$
We know $\bC(\alpha | \lambda,\mu,\emptyset)$ is the Laurent
expansion in $q$ of a rational function if
\begin{equation}\label{tnn4}
\alpha=\emptyset\ \ \ \text{or}  \ \ \ \mu = \emptyset\ .
\end{equation}
In the former case, there are no descendents and rationality is a central result of
\cite{moop}. In the latter case, rationality for the capped 
1-leg  descendent
vertex is a central result of \cite{part1}.

Define a partial ordering on  capped 2-leg descendent
vertices by the following rules. We say
\begin{equation*}
\bC(\alpha | \lambda,\mu,\emptyset) \ \triangleright \ \bC(\alpha' | 
\lambda',\mu',\emptyset),
\end{equation*}
if we have
\begin{enumerate}
\item[$\bullet$] $|\alpha| > |\alpha'|$,
 \item[$\bullet$] or $|\alpha| = |\alpha'|$ and $|\mu| > |\mu'|$,
 \item[$\bullet$] or $|\alpha| = |\alpha'|$, $|\mu| = |\mu'|$ and
$|\lambda| > |\lambda'|$.
\end{enumerate}
The relationship $\triangleright$ is just the lexicographic
ordering on the triples $(|\alpha|,|\mu|,|\lambda|)$.

To prove Theorem \ref{ttt} in the 2-leg case for
$\bC(\alpha | \lambda,\mu,\emptyset)$, we assume
rationality holds for all vertices
$\bC(\alpha' | \lambda',\mu',\emptyset)$
occuring earlier in the partial ordering $\triangleright$.
The ground cases of the induction are \eqref{tnn4}
so we may assume $|\alpha|,|\mu|>0$.

To prove rationality for 
$\bC(\alpha | \lambda,\mu,\emptyset)$, we will use geometric
constraints. The approach depends upon 
whether $|\alpha|<|\lambda|$ or $|\alpha|\geq |\lambda|$.
In the former case, we will use $\bA_1$-surface geometry.
In the latter case, we will use Hirzebruch surfaces.

\subsection{ Case $|\alpha|<|\lambda|$}
\label{jj3}

\subsubsection{$\bA_1$ geometry} \label{angeo}
Let $\zeta$ be a primitive $(n+1)^{th}$ root
of unity, for $ n \geq 0$.
Let the generator of the
cyclic group $\mathbb{Z}_{n+1}$ act on $\com^2$ by
$$
(z_1,z_2)\mapsto  (\zeta\, z_1, \zeta^{-1}z_2)\,.
$$
Let  $\bA_n$ be  the minimal resolution of the quotient
$$
\bA_n \rightarrow \com^2/\mathbb{Z}_{n+1}.
$$
The diagonal $(\C^*)^2$-action on $\com^2$ commutes
with the action of $\mathbb{Z}_{n+1}$. As a result, the
surfaces $\bA_n$ are toric.

The surface $\bA_1$ is isomorphic to the total space of
$$\OO(-2) \rarr \Pp\ $$
and admits a toric compactification 
$$\bA_1 \subset \mathbf{P}(\OO+\OO(-2)) = \bF_2$$
by the Hirzebruch surface. 

Let $C\subset \bA_1$ be the 0-section of $\OO(-2)$, and
let $\star,\bullet\in C$ be the $(\com^*)^2$-fixed points.
Let $$\overline{\star} ,\overline{\bullet}\in\bF_2\setminus \bA_1$$
be the $(\com^*)^2$-fixed points lying above $\star,\bullet$
repectively.
We fix our $(\com^*)^2$-action by specifying tangent weights
at the four  $(\com^*)^2$-points:
\begin{eqnarray}
 T_{\star}(\bF_2):\ &  s_1-s_2, \ & \ \ 2s_2   \label{rrtt}\\
 T_{\bullet}(\bF_2):\ &  s_2-s_1, \ &\ \  2s_1 \nonumber\\
 T_{\overline\star}(\bF_2):\ &  s_1-s_2, \ & -2s_2 \nonumber\\
 T_{\overline\bullet}(\bF_2):\ &  s_2-s_1, \ & -2s_1\ . \nonumber
\end{eqnarray}
None of the tangent weights are divisible by $s_1+s_2$.

Consider the nonsingular projective toric variety  $\bF_2 \times \Pp$.
The 3-torus 
$$\T=(\com^*)^3$$
 acts on $\bF_2$ as above via the first two factors and
acts on $\Pp$ via the third factor with tangent weights $s_3$ and $-s_3$
at the points $0,\infty\in \Pp$ respectively.
The two $\T$-invariant divisors of $\bF_2 \times \Pp$
$$\bD_0 = \bF_2 \times \{0\}, \ \ \bD_\infty = \bF_2 \times \{\infty\}$$
will play a basic role.
The 3-fold $\bF_2\times \Pp$ has eight $\T$-fixed points which we denote by
$$\star_0,\overline{\star}_0,\bullet_0,\overline{\bullet}_0,
\star_\infty,\overline{\star}_\infty,\bullet_\infty,\overline{\bullet}_\infty \in 
\bF_2\times\Pp $$
where the subscript indicates the coordinate in $\Pp$.

Let $L_0 \subset \bF_2\times \Pp$ be the $\T$-invariant line
connecting $\star_0$ and $\overline{\star}_0$. 
Similarly, let $L_\infty \subset \bF_2\times \Pp$ be the $\T$-invariant line
connecting $\star_\infty$ and $\overline{\star}_\infty$. 
The lines $L_0$ and $L_\infty$
are $\Pp$-fibers of the Hirzebruch surfaces $\bD_0$ and $\bD_\infty$.
We have
$$H_2(\bF_2\times \Pp,\mathbb{Z}) = \mathbb{Z}[C] \oplus
\mathbb{Z} [L_0] \oplus
\mathbb{Z}  [P]$$
where $P$ is the fiber of the projection to $\bF_2$.

\subsubsection{Integration}
We will find relations which express
$\bC(\alpha|\lambda, \mu,\emptyset)$ in terms
of inductively treated vertices.
Let $\mu'$ be {\em any} partition. The relations will
be obtained from
vanishing stable pairs invariants of the relative
geometry $\bF_2\times \Pp/\bD_\infty$
in curve class
$$\beta = |\mu|\cdot [C] + (|\lambda|+|\mu'|)\cdot [P] \ . $$
The virtual dimension of the associated moduli space is 
$$\text{dim}^{vir}\ P_n(\bF_2\times \Pp/\bD_\infty,\beta) = 2|\lambda|+2|\mu'| \ .$$

Relative conditions in $\text{Hilb}(\bD_\infty, |\lambda|+|\mu'|)$
are best expressed in terms of the Nakajima basis given by a
$\T$-equivariant cohomology weighted partition of $|\lambda|+|\mu'|$.
We impose
the relative condition determined by the partition
$$\lambda \cup \mu'=\lambda_1+\ldots+\lambda_{\ell(\lambda')}
+\mu_1'+\ldots+\mu'_{\ell(\mu')}$$ 
weighted by
$[L_\infty]\in H^2_\T(\bD_\infty,\mathbb{Q})$ for the parts of $\lambda$
and  
$[\bullet_\infty]\in H^4_\T(\bD_\infty,\mathbb{Q})$ for the parts
of $\mu'$.
We denote the relative condition by $\mathsf{r}(\lambda,\mu')$.
After imposing $\mathsf{r}(\lambda,\mu')$,
the virtual dimension drops to
$$\text{dim}^{vir}\ P_n(\bF_2\times \Pp/\bD_\infty,\beta)_{\mathsf{r}(\lambda,\mu')} 
= |\lambda|+ |\mu'|-\ell(\mu') \geq |\lambda|\ .$$

We now specify descendent insertion. Since $|\alpha|>0$, there is a
positive part $\alpha_1$. We consider the descendent insertion
$$\tau_{\alpha_1}([L_0])\cdot \prod_{i=2}^\ell \tau_{\alpha_i}([D_0]) \ . $$
The descendent insertion imposes $|\alpha|+1$ conditions.
Therefore, the integral
\begin{equation}\label{hhhooo}
\int_{ [P_n(\bF_2\times \Pp/\bD_\infty,\beta)_{\mathsf{r}(\lambda,\mu')}]^{vir}}
\tau_{\alpha_1}([L_0])\cdot \prod_{i=2}^\ell \tau_{\alpha_i}([D_0])\ ,
\end{equation}
viewed as $\T$-equivariant push-forward to a point,
has dimension at least 
$$|\lambda|-|\alpha|-1  \geq 0.$$

\begin{Proposition} The $\T$-equivariant integral 
\eqref{hhhooo} vanishes for all $n$. \label{ggh2}
\end{Proposition}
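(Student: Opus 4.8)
The plan is to deduce the vanishing from the dimension count already assembled above, upgraded by an equivariant degree argument. Since the virtual dimension of $P_n(\bF_2\times\Pp/\bD_\infty,\beta)$ depends only on $\beta$ and not on $n$, it suffices to control a single cohomological degree uniformly in $n$. As $\bF_2\times\Pp$ is a nonsingular projective $3$-fold, the relative moduli space is proper and the $\T$-equivariant integral \eqref{hhhooo} is an honest push-forward to a point, lowering cohomological degree by twice the virtual dimension. After imposing $\mathsf{r}(\lambda,\mu')$ the virtual dimension is $|\lambda|+|\mu'|-\ell(\mu')$, while the integrand has complex codimension $|\alpha|+1$; hence \eqref{hhhooo} lies in $H^{-2\delta}_{\T}(\pt)$ with $\delta=\big(|\lambda|-|\alpha|-1\big)+\big(|\mu'|-\ell(\mu')\big)$. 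The case hypothesis $|\alpha|<|\lambda|$ and the bound $|\mu'|\ge\ell(\mu')$ give $\delta\ge0$, exactly as recorded above. Whenever $\delta>0$ the integral sits in a strictly negative degree of $H^*_\T(\pt)=\Q[s_1,s_2,s_3]$ and therefore vanishes, coefficient by coefficient in $q$. This settles everything except the extremal configuration $|\alpha|=|\lambda|-1$ with $\mu'=(1^m)$, where $\delta=0$ and \eqref{hhhooo} is a priori a $q$-series with constant coefficients.

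To kill this boundary case I would use that $\bF_2$ is ruled. Let $\rho\colon\bF_2\to\Pp$ be the ruling and set $\Phi=\rho\times\mathrm{id}\colon \bF_2\times\Pp\to\Pp\times\Pp$, a $\Pp$-bundle whose fiber is a ruling line and whose fiber class is precisely $[L_0]$. Two facts line up: the curve class $\beta=|\mu|[C]+(|\lambda|+|\mu'|)[P]$ carries no $[L_0]$-summand, so it is horizontal for $\Phi$; and every descendent class is a $\Phi$-pull-back, since $[D_0]=\Phi^*(1\boxtimes[\mathrm{pt}])$ and $[L_0]=\Phi^*([\mathrm{pt}]\boxtimes[\mathrm{pt}])$. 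The one-parameter subgroup $\com^*\subset\T$ that acts only along the ruling (trivially on the base of $\rho$ and on the $\Pp$-factor) has fixed locus $(C\sqcup\overline C)\times\Pp$, and because $\beta$ is $\Phi$-horizontal every fixed stable pair is supported on these two sections together with vertical $[P]$-lines, with no ruling component. Localizing the degree-$0$ number with respect to this $\com^*$ and using that $\tau_{\alpha_1}([L_0])=\tau_{\alpha_1}(\Phi^*([\mathrm{pt}]\boxtimes[\mathrm{pt}]))$ pairs the universal sheaf against a ruling-fiber class that $\beta$ cannot feed, the fixed contributions should organize into a cancellation and the constant should vanish.

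The hard part will be exactly this last step: converting the horizontality of $\beta$ for $\Phi$ into an honest vanishing of the $\delta=0$ constant rather than a mere degree bound, since the restrictions of $\mathrm{ch}(\FF)$ to the fixed sections do not visibly annihilate the fiber class. An alternative I would run in parallel is to pass to ordinary cohomology—legitimate, as the $\delta=0$ value is a number—use the non-equivariant identity $[L_0]=[L_\infty]$ to slide the insertion $\tau_{\alpha_1}([L_0])$ onto the relative divisor $\bD_\infty$, and then trade it for relative insertions at $\bD_\infty$ in the spirit of the descendent/relative correspondence recalled in Section~\ref{jj5}; the resulting relative condition is over-determined against $\mathsf{r}(\lambda,\mu')$ and forces the integral to vanish. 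All remaining steps are the routine bookkeeping already carried out above, so the content of the argument concentrates entirely in disposing of this single extremal term.
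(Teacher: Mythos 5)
Your first paragraph reproduces the paper's handling of the positive-dimension case: properness of $P_n(\bF_2\times \Pp/\bD_\infty,\beta)_{\mathsf{r}(\lambda,\mu')}$ plus the degree count places \eqref{hhhooo} in a strictly negative degree of $H_\T^*(\pt)=\Q[s_1,s_2,s_3]$, forcing vanishing. You also correctly isolate the extremal case $\delta=0$ (that is, $|\alpha|=|\lambda|-1$ and $\mu'=(1^m)$) as the entire content of the Proposition. But there the proposal has a genuine gap: neither of your two strategies closes it, and no amount of horizontality or dimension bookkeeping can, because a virtual count in exactly balanced dimension has no formal reason to be zero. The paper's argument at this point is geometric, not numerical: since the push-forward is then a constant in $\Q\subset \Q[s_1,s_2,s_3]$, it may be computed by $\T$-localization followed by the specialization $s_1+s_2=0$. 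Positivity of the $[C]$-coefficient $|\mu|>0$ forces every $\T$-fixed stable pair to have a component inside $\bA_1\times\Pp$ (relative to the divisor over $\infty$), and the $s_1+s_2=0$ specialization kills those contributions because $\bA_1$ is holomorphic symplectic; the remaining contributions, supported over $\bF_2\setminus\bA_1$, are 1-leg descendent vertex terms with no pole at $s_1+s_2=0$ precisely because of the tangent-weight choice \eqref{rrtt} (no weight divisible by $s_1+s_2$), so the specialization is well-defined and annihilates everything. This use of the anti-diagonal subtorus together with the symplectic geometry of $\bA_1$ is the one idea your proposal is missing --- it is also the reason the ambient surface is $\bF_2$, a toric compactification of $\bA_1$, in the first place.

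Concerning your fallbacks: the ruling-circle localization uses the subtorus $s_1=s_2$ (the diagonal, not the anti-diagonal), whose fixed loci are large --- essentially the relative local theory over $C\times\Pp$ together with vertical components --- and nothing in your setup produces the asserted cancellation; you concede as much yourself ("should organize into a cancellation"). The second alternative misapplies Section~\ref{jj5}: Proposition~\ref{gbb5} is a maximal-rank statement for the descendent/Nakajima pairing on $\Hilb(\com^2,d)$, not a device for sliding $\tau_{\alpha_1}([L_\infty])$ into the boundary; under degeneration to the normal cone of $\bD_\infty$ such an insertion distributes to the bubble and produces rubber descendent integrals, not additional relative conditions. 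Even granting some correspondence, at $\delta=0$ the dimensions match exactly, so the claimed over-determination is simply false --- there is no dimension excess left to exploit. In summary, your $\delta>0$ argument agrees with the paper, but the extremal case you rightly flag as "the hard part" remains unproven, and the missing ingredient is the $s_1+s_2=0$ specialization exploiting the holomorphic symplectic structure of $\bA_1$.
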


\begin{proof} If the integral has dimension greater than 0, then
$\T$-equivariant push-forward with values in $\mathbb{Q}[s_1,s_2,s_3]$
vanishes since the moduli space 
$$P_n(\bF_2\times \Pp/\bD_\infty,\beta)_{\mathsf{r}(\lambda,\mu')}$$
is compact.

If the integral has dimension 0, then $\T$-equivariant push-forward 
is a constant in $\mathbb{Q}\subset \mathbb{Q}[s_1,s_2,s_3]$.
In particular, the integral can be computed by $\T$-equivariant
localization
followed by the specialization 
$$s_1+s_2=0.$$
The $\T$-equivariant localization formula for $\bF_2\times \Pp/\bD_\infty$
will be discussed carefully below. In fact,
very little is needed for the vanishing here.

Since the $[C]$ coefficient of $\beta$ is positive,
a $\T$-fixed stable pair
in the moduli space
must contain a component in the 
open set $\bA_1\times \Pp$ relative
to the divisor over $\infty$. The $s_1+s_2=0$ 
specialization of  localization on 
$\bA_1\times \Pp$ relative to $\infty$   is well-known to vanish 
since $\bA_1$ is
holomorphic symplectic (and the  $[C]$ coefficient is positive).
In addition, there may be components of  $\T$-fixed stable pairs with
support over $\bF_2 \setminus \bA_1$. The latter give rise to
descendent 1-leg vertex contributions which, because of the tangent
weight analysis \eqref{rrtt}, have no poles at $s_1+s_2=0$.
Hence, the substitution $s_1+s_2=0$ after localization is
well-defined and kills all contributions.
\end{proof}

\subsubsection{Relation}
We define the $\T$-equivariant series 
\begin{equation*}
\bZ_\beta\Big(\alpha,\lambda,\mu'\Big)
=
\sum_{n}  q^n
\int_{ [P_n(\bF_2\times \Pp/\bD_\infty,\beta)_{\mathsf{r}(\lambda,\mu')}]^{vir}}
\tau_{\alpha_1}([L_0])\cdot \prod_{i=2}^\ell \tau_{\alpha_i}([D_0])\ 
\end{equation*}
obtained from the integrals \eqref{hhhooo}.
By Proposition \ref{ggh2}, the series $\bZ\Big(\alpha,\lambda,\mu'\Big)_\beta$
vanishes identically.
We will calculate the left side of
\begin{equation}\label{ttx}
\bZ_\beta\Big(\alpha,\lambda,\mu'\Big)=0
\end{equation}
by capped localization to obtain a relation constraining
capped descendent vertices.

The stable pairs theory of the relative geometry $\bF_2\times \Pp/\bD_\infty$ admits a
capped localization formula. 
Over $0\in \Pp$, capped descendent vertices occur as in
the cappled localization formula of Section \ref{exxx}.
Over $\infty \in \Pp$,
capped rubber terms for $\T$-equivariant localization in the relative
geometry arise.
Capped rubber
is  discussed in Section 3.4 of \cite{moop}.
Since all our descendent
insertions lie over $0\in \Pp$, 
our capped rubber has the same definition as
the capped rubber of \cite{moop}.

Two types of capped rubber contributions arise
over $\infty\in \Pp$ in the 
$\T$-equivariant localization formula for
$\bZ_\beta\Big(\alpha,\lambda,\mu'\Big)$,
\begin{enumerate}
\item[(i)] capped 1-leg rubber corresponding to $\T$-fixed stable pairs
with components contracted to $\bF_2 \setminus \bA_1$,
\item[(ii)] capped rubber contributions of $\bA_1 \times \Pp$ relative to the
divisor over $\infty$.
\end{enumerate}
The capped contributions (i) are just 1-leg  with no
descendents, so are rational.
The capped contributions (ii) are proven to be rational in 
Lemma 6 of \cite{moop} relying on the results of \cite{mo1,mo2}.
See Section 5 of \cite{mpt} for the stable pairs results.

We now analyze the capped localization 
of $\bZ_\beta\Big(\alpha,\lambda,\mu'\Big)$
over $0\in \Pp$. 
A term in the capped localization formula is said to be
{\em principal} if not all the capped descendent
vertices which arise are known inductively to be rational.

First consider the descendent insertions.
The descendents 
$$\tau_{\alpha_1}([L_0])\cdot \prod_{i=2}^\ell \tau_{\alpha_i}([D_0]) \ $$
are free to distribute{\footnote{The classes
$[L_0],[D_0] \in H_\T^*(\bF_2\times\Pp,\mathbb{Q})$
are expressed as $\mathbb{Q}(s_1,s_2,s_3)$-linear
combinations of the $\T$-fixed points incident to $L_0$ and
$D_0$ respectively.}}
to the $\T$-fixed points over $0\in \Pp$.
By the choice of $\beta$, capped 2-leg descendent 
vertices can only occur at ${\star}_0$ and $\bullet_0$.
Descendents which distribute to $\overline{\star}_0$ and
$\overline{\bullet}_0$ will lie on capped 1-leg descendent
vertices.
The first descendent $\tau_{\alpha_1}(L_0)$ has to lie on
$\star_0$ or $\overline{\star}_0$. We conclude
all capped descendent vertices are known inductively to be rational except possibly 
when all the descendents lie on $\star_0$.

Next consider the edge degree $d$ of $C$ over $0\in \Pp$ in
the capped localization formula.
If $d<|\mu|$, then the capped descendent vertex at $\star_0$
is known inductively to be rational since the minimal relative partition is
lower. Hence, we restrict ourselves to the principal terms where $d=|\mu|$.

Since all of $|\mu|\cdot [C]$ occurs over $0\in \Pp$, the
rubber over $\infty\in\Pp$ is all 1-leg.
By the relative conditions imposed by $\mu'$ with weights
$[\bullet_\infty]$, the principal terms all have a
capped 2-leg vertex with no descendents at $\bullet_0$ with outgoing
partitions of size $|\mu|$ and $|\mu'|.$

Finally, consider the relative conditions $\lambda$ weighted
by $[L_\infty]$.
The weight allows the parts to distribute to $\overline{\star}_\infty$.
Such a distribution would result in an inductively treated
capped descendent vertex at $\star_0$ with lower maximal
relative partition.

In the principal terms of the
 capped localization of
 \eqref{ttx}, precisely the following set of
capped 2-leg descendent vertices occur at $\star_0$:
\begin{equation}\label{bxs}
\Big\{ \ \bC(\alpha|{\lambda},\widehat{\mu},\emptyset)\ \Big| \ 
|\widehat{\mu}|=|\mu| \ \Big\}.
\end{equation}
The principal terms arise as displayed in Figure \ref{A1cap}.
In addition to the vertex
$\bC(\alpha|{\lambda},\widehat{\mu},\emptyset)$ at $\star_0$,
there is a capped edge with partitions
$$|\widehat{\mu}|=|\widehat{\mu}'|$$
along the curve $C$ over $0\in \Pp$.
Finally, there is
capped 2-leg vertex with no descendents at $\bullet_0$ with outgoing
partitions $\widehat{\mu}'$ and $\mu'.$

\vspace{10pt}
\psset{unit=0.3 cm}
\begin{figure}[!htbp]
  \centering
   \begin{pspicture}(0,0)(19,20)
   \psline(4,4)(16,4)
   \psline(4,16)(16,16)
   \psline(0,0)(4,4)(4,16)(0,20)
   \pszigzag[coilarm=0.1,coilwidth=0.5,linearc=0.1](12,20)(16,16)
   \pszigzag[coilarm=0.1,coilwidth=0.5,linearc=0.1](16,4)(16,16)
   \pszigzag[coilarm=0.1,coilwidth=0.5,linearc=0.1](12,0)(16,4)
   \psline[doubleline=true](3,7)(4,8)(5,8)
   \psline[doubleline=true](3,13)(4,12)(5,12)
   \rput[l](2.3,16){$\star_0$}
   \rput[l](2.3,4){$\bullet_0$}
   \rput[l](17,18){$\lambda$}
   \rput[l](16.2,16){$\star_\infty$}
   \rput[l](16.2,4){$\bullet_\infty$}
   \rput[l](17,2){$\mu'$}
   \rput[c](6,8){$\widehat{\mu}'$}
   \rput[c](6,12){$\widehat{\mu}$}
   \end{pspicture}
 \caption{Principal terms}
  \label{A1cap}
\end{figure}
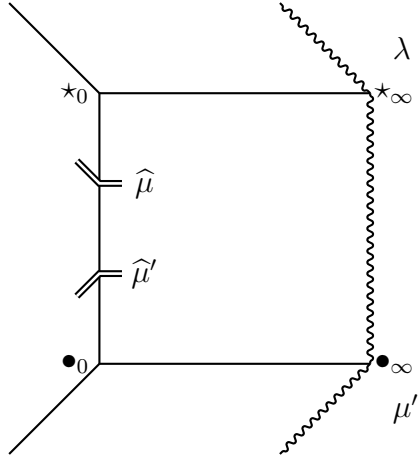

The system of equations as the partition $\mu'$ varies has
 unknowns \eqref{bxs} parameterized
by partitions of $|\mu|$. However, the number of equations
is infinite. The induction step is established if
the set of equations as $\mu'$ varies has maximal 
rank{\footnote{Maximal rank here is
equal to the number $\mathcal{P}(|\mu|)$
of partitions of size $|\mu|$.}} with respect to the unknowns \eqref{bxs}.

The maximal rank statement is proven with two observations.
First, the capped edge matrix along $C$ has maximal rank
\cite{moop}.
Second, the matrix of capped 2-leg vertices
$$\bC(\emptyset|\ \widehat{\mu}',\mu',\emptyset)$$
has maximal rank $\mathcal{P}(|\mu|)$ even when $\mu'$ varies only among
partitions of size at most $|\mu|-1$ by 
Lemma 9 of \cite{moop}.
\qed

\subsection{ Case $|\alpha|\geq |\lambda|$}
\label{jj4}

\subsubsection{Hirzebruch surfaces} \label{wrrt}
We require a different geometric construction for the
inductive relation in case $|\alpha|\geq |\lambda|$.

Let $k>0$ be an integer.
Let $\bF_k$ be the Hirzebruch surfaces given by the 
projective bundle
$$\mathbf{P}(\OO \oplus \OO(k)) \rarr \Pp\ .$$
The split presentation distinguishes
two sections
$$C^+, C^- \subset \bF_k$$
of self-intersections $k$ and $-k$ respectively.
Let $(\com^*)^2$ act on $\bF_k$ with fixed points
$$\star^+,\bullet^+, \star^-,\bullet^- \in \bF_k$$
where the first two lie on $C^+$ and the last two lie on
$C^-$. The 2-torus invariant curves of $\bF_k$ are then
\begin{equation}\label{23t}
C^+,C^-, L^\star, L^\bullet \subset \bF_k
\end{equation}
where $L^\star$ is a fiber of the projective bundle
connecting $\star^+$ and $\star^-$ and similarly for
$L^\bullet$.

Consider the nonsingular projective toric variety  $\bF_k \times \Pp$.
The 3-torus 
$$\T=(\com^*)^3$$
 acts on $\bF_k$ as above via the first two factors and
acts on $\Pp$ via the third factor with tangent weights $s_3$ and $-s_3$.
The 3-fold $\bF_k\times \Pp$ has eight $\T$-fixed points which we denote by
$$\star^+_0,\bullet^+_0, \star^-_0,\bullet^-_0,
\star^+_\infty,\bullet^+_\infty, 
\star^-_\infty,\bullet^-_\infty \in \bF_k\times \Pp $$
where the subscript indicates the coordinate in $\Pp$.

There are twelve $\T$-invariant curves of $\bF_k \times \Pp$.
There are four of type \eqref{23t} lying over $0\in \Pp$,
four lying over $\infty\in \Pp$ and
four fibers 
$$P_{\star^+},P_{\bullet^+}, P_{\star^-},P_{\bullet^-}$$
of the projection of $\bF_k\times \Pp$
to $\bF_k$.
We have
$$H_2(\bF_k\times \Pp,\mathbb{Z}) = \mathbb{Z}[C^+_0] \oplus
\mathbb{Z} [L^\star_0] \oplus
\mathbb{Z}  [P_{\star^+}].$$

\subsubsection{Integration}
We will find relations which express
$\bC(\alpha|\lambda, \mu,\emptyset)$ in terms
of inductively treated vertices.
Let $k$ be a positive integer satisfying
$$k >  3|\alpha| + 3|\lambda|+3|\mu|\ .$$
The relations will
be obtained from
vanishing stable pairs invariants of 
the nonsingular projective toric
3-fold $\bF_k\times \Pp$
in curve class
$$\beta = |\mu|\cdot [C_0^+] + |\lambda|\cdot [L_0^\star] \ . $$
The virtual dimension of the associated moduli space is 
$$\text{dim}^{vir}\ P_n(\bF_k\times\Pp, \beta) > 
3|\alpha| + 3|\lambda|+3|\mu|\ $$
since we easily compute
$$c_1(\bF_k\times \Pp) \cdot [C^+_0] = k+2\ , \ \ \ 
c_1(\bF_k\times \Pp) \cdot [L^\star_0] = 2
.$$

Our relations will be parameterized by two partitions 
$\lambda'$ and $\mu'$ 
which satisfy
$$|\lambda'|\leq|\lambda|-1, \ \ \ |\mu'|\leq |\mu|-1 \ .$$
 We consider the descendent insertion
\begin{equation}\label{nyr}
\prod_{i=1}^{\ell(\alpha)} \tau_{\alpha_i}([\star^+_0]) \cdot
\prod_{i=1}^{\ell(\lambda')} \tau_{\lambda'_i}([\star^-_0]) \cdot
\prod_{i=1}^{\ell(\mu')} \tau_{\mu'_i}([\bullet^+_0]) \ .
\end{equation}
Since $\tau_r(\mathsf{p})$ imposes $r+2$ conditions,
the descendent insertion \eqref{nyr} imposes at most
$$3|\alpha|+ 3|\lambda'|+ 3|\mu'|$$ 
conditions.

\begin{Proposition} The $\T$-equivariant integral 
\begin{equation*}
\int_{ [P_n(\bF_k\times \Pp,\beta)]^{vir}}
\prod_{i=1}^{\ell(\alpha)} \tau_{\alpha_i}([\star^+_0]) \cdot
\prod_{i=1}^{\ell(\lambda')} \tau_{\lambda'_i}([\star^-_0]) \cdot
\prod_{i=1}^{\ell(\mu')} \tau_{\mu'_i}([\bullet^+_0])
\end{equation*}
vanishes for all $n$. \label{ggh22}
\end{Proposition}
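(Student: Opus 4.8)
The plan is to prove this by a clean dimension count together with the properness of the moduli space, in the same spirit as Proposition \ref{ggh2} but without the holomorphic-symplectic input needed there. The decisive simplification is that $\bF_k\times\Pp$ is projective, so no $\T$-equivariant residues are required and the push-forward to a point is automatically polynomial; moreover the strict inequalities in the hypotheses (the large $k$, together with $|\lambda'|\le|\lambda|-1$ and $|\mu'|\le|\mu|-1$) guarantee that the integrand always carries strictly positive residual dimension, so the delicate dimension-zero case that arose in Proposition \ref{ggh2} never occurs.

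First I would record that, since $\bF_k\times\Pp$ is a nonsingular projective toric $3$-fold, the moduli space $P_n(\bF_k\times\Pp,\beta)$ is proper. Hence the $\T$-equivariant push-forward to a point of any class lands in the polynomial ring $\Q[s_1,s_2,s_3]$, with no denominators. This is precisely the feature absent in Proposition \ref{ggh2}, where properness held only for the $\T$-fixed locus and forced a localization-plus-specialization argument.

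Next I would compare dimensions. By the virtual dimension estimate
$$\text{dim}^{vir}\ P_n(\bF_k\times\Pp,\beta) > 3|\alpha|+3|\lambda|+3|\mu|,$$
valid because $k>3|\alpha|+3|\lambda|+3|\mu|$ and $c_1(\bF_k\times\Pp)\cdot[C^+_0]=k+2$, together with the fact that each $\tau_r(\mathsf{p})$ with $r\ge 1$ imposes $r+2\le 3r$ conditions, the descendent insertion \eqref{nyr} cuts the virtual dimension by at most
$$3|\alpha|+3|\lambda'|+3|\mu'|\le 3|\alpha|+3(|\lambda|-1)+3(|\mu|-1)=3|\alpha|+3|\lambda|+3|\mu|-6.$$
Subtracting, the integral of the proposition, viewed as a $\T$-equivariant push-forward to a point, has dimension strictly greater than $6$, in particular strictly positive.

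Finally I would conclude: a class of strictly positive dimension pushed forward from a proper space to a point is an element of $\Q[s_1,s_2,s_3]$ of strictly negative cohomological degree, which must vanish. Thus the integral is zero for every $n$. There is no serious obstacle here — unlike Proposition \ref{ggh2}, neither a symplectic vanishing nor a boundary analysis is needed — and the only place requiring care is the bookkeeping in the dimension count, where the large $k$ inflates $\text{dim}^{vir}$ through the coefficient $k+2$ while the two units subtracted via $|\lambda'|\le|\lambda|-1$ and $|\mu'|\le|\mu|-1$ keep us strictly away from residual dimension zero.
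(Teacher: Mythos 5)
Your proposal is correct and follows essentially the same route as the paper, whose proof of Proposition \ref{ggh22} is exactly the dimension count $3|\alpha|+3|\lambda|+3|\mu| - (3|\alpha|+3|\lambda'|+3|\mu'|) > 0$ with the vanishing then immediate from properness of $P_n(\bF_k\times \Pp,\beta)$. You merely make explicit what the paper leaves implicit — that projectivity of $\bF_k\times\Pp$ puts the push-forward in $\Q[s_1,s_2,s_3]$ and that the strict inequalities $|\lambda'|\le|\lambda|-1$, $|\mu'|\le|\mu|-1$ avoid the dimension-zero subtlety of Proposition \ref{ggh2} — which is accurate and consistent with the paper's argument.
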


\begin{proof}
The integral,
viewed as $\T$-equivariant push-forward to a point,
has dimension at least 
$$3|\alpha| + 3|\lambda|+3|\mu| - (3|\alpha|+ 3|\lambda'|+ 3|\mu'|) >0,$$
so always vanishes.
\end{proof}

\subsubsection{Relation}
We define the $\T$-equivariant series 
\begin{equation*}
\bZ_\beta\Big(\alpha,\lambda',\mu'\Big)
=
\sum_{n}  q^n
\int_{ [P_n(\bF_k\times \Pp,\beta)]^{vir}}
\prod_{i=1}^{\ell(\alpha)} \tau_{\alpha_i}([\star^+_0]) \cdot
\prod_{i=1}^{\ell(\lambda')} \tau_{\lambda'_i}([\star^-_0]) \cdot
\prod_{i=1}^{\ell(\mu')} \tau_{\mu'_i}([\bullet^+_0])\ .
\end{equation*}
By Proposition \ref{ggh22}, 
the series $\bZ_\beta\Big(\alpha,\lambda',\mu'\Big)$
vanishes identically.
We will calculate the left side of
\begin{equation}\label{ttxx}
\bZ_\beta\Big(\alpha,\lambda',\mu'\Big)=0
\end{equation}
by capped localization to obtain a relation constraining
capped descendent vertices.

As before,
a term in the capped localization formula is 
{principal} if not all the capped descendent
vertices which arise are known inductively to be rational.
Because the curve class $\beta$ lies in the fiber of the
projection of $\bF_k\times \Pp$ to $\Pp$, no capped descendent vertex in
the localization formula will have more than 2 legs.
Capped descendent vertices at  $\T$-fixed points other than
$\star^+_0$ will have descendent partition of size less than
$|\alpha|$ and thus are known inductively to be rational.

Consider the capped 2-leg descendent vertex at $\star^+_0$.
Let $d_C$ be the associated edge degree of $C^+_0$ in
the capped localization formula.
By the geometry of $\bF_k$, the class
$$\beta - d_C[C^+_0]\in H_2(\bF_k,\mathbb{Z})$$
is not effective if $d_C>|\mu|$.
If $d_C<|\mu|$, then the vertex at $\star_0$
is known inductively to be rational since the minimal relative partition is
lower. Hence, we restrict ourselves to the principal terms where $d_C= |\mu|$.

Finally, consider the edge degree $d_L$ of $L^\star_0$.
If $d_L<|\lambda|$, then the vertex at $\star_0$
is known inductively to be rational since the maximal relative partition is
lower. Certainly $d_L>|\lambda|$ is not permitted.
We restrict ourselves to the principal terms where $d_L= |\lambda|$.

In the principal terms of the
 capped localization of
 \eqref{ttxx}, precisely the following set of
capped 2-leg descendent vertices occur at $\star^+_0$:
\begin{equation}\label{bxss}
\Big\{ \ \bC(\alpha|\widehat{\lambda},\widehat{\mu},\emptyset)\ \Big| \ 
|\widehat{\lambda}|=|\lambda|,\ 
|\widehat{\mu}|=|\mu| \ \Big\}.
\end{equation}
The principal terms arise as displayed in Figure \ref{xxcap}.
In addition to the vertex
$\bC(\alpha|\widehat{\lambda},\widehat{\mu},\emptyset)$ at $\star^+_0$,
there are capped edges along 
$C_0^+$ and $L^\star_0$ 
with partitions
$$|\widehat{\lambda}|=|\widehat{\lambda}'|,\ \ \
|\widehat{\mu}|=|\widehat{\mu}'|$$
respectively.
Finally, there are
capped 1-leg descendent vertices 
$$\bC(\lambda'|\widehat{\lambda}',\emptyset,\emptyset), \ \ \
\bC(\mu'|\widehat{\mu}',\emptyset,\emptyset)$$
at 
$\star_0^-$ and
$\bullet_0^+$.

\vspace{10pt}
\psset{unit=0.3 cm}
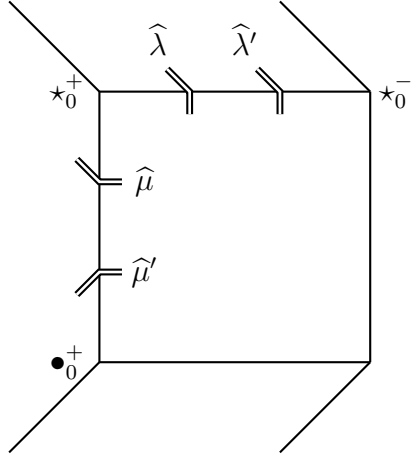
\begin{figure}[!htbp]
  \centering
   \begin{pspicture}(0,0)(19,20)
   \psline(4,4)(16,4)
   \psline(4,16)(16,16)
   \psline(0,0)(4,4)(4,16)(0,20)
   \psline(12,20)(16,16)
   \psline(16,4)(16,16)
   \psline(12,0)(16,4)
   \psline[doubleline=true](8,15)(8,16)(7,17)
    \psline[doubleline=true](12,15)(12,16)(11,17)
   \psline[doubleline=true](3,7)(4,8)(5,8)
   \psline[doubleline=true](3,13)(4,12)(5,12)
   \rput[tr](7,19){$\widehat{\lambda}$} 
\rput[tr](11,19){$\widehat{\lambda}'$}
   \rput[l](1.8,16){$\star^+_0$}
   \rput[l](1.8,4){$\bullet_0^+$}
   \rput[l](16.4,16){$\star^-_0$}
   \rput[c](6,8){$\widehat{\mu}'$}
   \rput[c](6,12){$\widehat{\mu}$}
   \end{pspicture}
 \caption{Principal terms}
  \label{xxcap}
\end{figure}

The system of equations as the partitions $\lambda'$ and $\mu'$ vary has
 unknowns \eqref{bxss} parameterized
by partitions of $|\lambda|$ and $|\mu|$.
The induction step is established if
the set of equations as $\lambda'$  and $\mu'$ vary has maximal 
rank with respect to the unknowns \eqref{bxss}.

The maximal rank statement is proven by the following observations.
The capped edge matrices along $C_0^+$ and $L^\star_0$ have maximal rank
\cite{moop}.
The matrix of capped 1-leg vertices
$$\bC(\lambda'|\ \widehat{\lambda}',\emptyset,\emptyset)$$
has maximal rank $\mathcal{P}(|\lambda|)$  when $\lambda'$ varies among
partitions of size at at most $|\lambda|-1$ by the
relative/descendent correspondence discussed in Section \ref{jj5}
below.
The matrix of capped 1-leg vertices
$$\bC(\mu'|\ \widehat{\mu}',\emptyset,\emptyset)$$
has maximal rank $\mathcal{P}(|\mu|)$  when $\mu'$ varies among
partitions of size at at most $|\mu|-1$ for the same reason.
\qed

\label{fvv3}

\subsection{ Relative/descendent correspondence}
\label{jj5}
The remaining step in the analysis of the relations in Section \ref{fvv3}
is to show the following maximal rank statement.
\begin{Proposition}\label{gbb5}
Let $d>0$ be an integer.
The matrix with coefficients
$$\bC(\alpha|\lambda,\emptyset,\emptyset)$$
as  $\alpha$ varies among
partitions of size at most $d-1$
and  $\lambda$ varies among paritions of size $d$
has maximal rank $\mathcal{P}(d)$.
\end{Proposition}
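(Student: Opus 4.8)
The plan is to establish Proposition \ref{gbb5} via the relative/descendent correspondence for the capped 1-leg geometry, reducing the question about descendent vertices $\bC(\alpha|\lambda,\emptyset,\emptyset)$ to a statement about purely relative quantities where the matrix structure is transparent. The key point is that the capped 1-leg descendent vertex, studied in \cite{part1}, is governed by the local $\Pp$-geometry $\OO\oplus\OO \to \Pp$ relative to a fiber, and on such local curve geometries there is a precise equivalence (used in \cite{lcdt}) between stationary descendent insertions $\tau_{\alpha_i}(\pt)$ and relative boundary conditions expressed in the Nakajima basis. Under this correspondence, inserting the descendents indexed by $\alpha$ is equivalent, up to a triangular change of basis, to imposing relative conditions indexed by a partition closely tied to $\alpha$.

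First I would recall the relative/descendent correspondence in the form: on the relevant local curve, the capped $1$-leg descendent vertex $\bC(\alpha|\lambda,\emptyset,\emptyset)$ can be rewritten as a $\Q(q,s_1,s_2,s_3)$-linear combination of capped $1$-leg \emph{relative} pairings $\bC(\emptyset|\lambda,\eta,\emptyset)$-type objects, where $\eta$ ranges over partitions and the transition coefficients form a triangular (with respect to a suitable dominance or size ordering) invertible matrix. Concretely, a descendent $\tau_{\alpha_i}(\pt)$ maps to a relative insertion supported on a single part of size governed by $\alpha_i$, plus lower-order corrections; assembling the product over $i$ gives a relation between the $\alpha$-indexed descendent data and $\eta$-indexed relative data with unipotent-triangular transition.

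Next I would use the fact that the matrix of purely relative capped pairings — the capped $1$-leg ``gluing'' or inner-product matrix $\bC(\emptyset|\lambda,\eta,\emptyset)$ with $|\lambda|=|\eta|=d$ — is nondegenerate of full rank $\mathcal{P}(d)$; this is exactly the kind of maximal-rank statement established for the capped pieces in \cite{moop} (cf. Lemma 9 of \cite{moop}, invoked already in Section \ref{jj3}). Composing the invertible relative pairing matrix with the triangular descendent-to-relative transition matrix, I would conclude that the matrix with entries $\bC(\alpha|\lambda,\emptyset,\emptyset)$ has rank $\mathcal{P}(d)$. The dimension bookkeeping here is what forces the constraint $|\alpha|\leq d-1$: a stationary descendent $\tau_r$ carries cohomological degree that shifts the effective relative condition, and only partitions $\alpha$ of size at most $d-1$ produce relative data of size $d$ that is nonzero after the triangular expansion, matching the $\mathcal{P}(d)$ partitions of $d$ available as the $\lambda$-index.

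The main obstacle I expect is controlling the triangularity of the descendent/relative transition matrix carefully enough to guarantee it is genuinely invertible on the relevant range, rather than merely upper-triangular with possibly vanishing diagonal entries. In particular I must verify that as $\alpha$ ranges over all partitions of size $\leq d-1$, the images under the correspondence span a $\mathcal{P}(d)$-dimensional subspace of the relative conditions of size $d$ — equivalently, that the leading (diagonal) terms of the correspondence are nonvanishing over $\Q(q,s_1,s_2,s_3)$ and that the $\alpha$ of size $\leq d-1$ together with their lower-order corrections exhaust a spanning set of size $\mathcal{P}(d)$. This is a counting-plus-nondegeneracy check: the number of partitions of size at most $d-1$ exceeds $\mathcal{P}(d)$, so one has freedom, but one must select a suitable invertible $\mathcal{P}(d)\times\mathcal{P}(d)$ submatrix and rule out accidental degeneration, which is precisely where the explicit structure of the $1$-leg correspondence from \cite{part1} and \cite{lcdt} does the real work.
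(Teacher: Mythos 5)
Your proposal has a genuine gap, and it is exactly the step you yourself flag as ``the main obstacle'': the claim that stationary descendents $\tau_{\alpha_i}(\mathsf{p})$ convert to relative conditions via a triangular transition with nonvanishing diagonal is not an available input here --- it is essentially the content of Proposition \ref{gbb5} itself. Section \ref{jj5} \emph{is} the relative/descendent correspondence for the capped 1-leg geometry, and the maximal-rank statement of the proposition is precisely what makes that correspondence invertible. Invoking an already-invertible correspondence from \cite{part1} or \cite{lcdt} to prove it is circular unless you independently establish the nonvanishing of the diagonal (or some other nondegeneracy), which your sketch explicitly defers (``where the real work'' happens). Nothing in the proposal rules out vanishing diagonal entries or accidental degeneration, and the detour through the no-descendent 2-leg matrix $\bC(\emptyset|\lambda,\eta,\emptyset)$ and Lemma 9 of \cite{moop} does not substitute for it --- it is also unnecessary, since $\bC(\alpha|\lambda,\emptyset,\emptyset)$ already pairs descendent data directly against relative conditions. (A minor slip: the count of partitions of size at most $d-1$ does not strictly exceed $\mathcal{P}(d)$ for $d=1,2$; one only has $\sum_{k=0}^{d-1}\mathcal{P}(k)\geq \mathcal{P}(d)$ via the injection described below.)

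The paper closes this gap concretely in two ways, either of which your outline is missing. The main proof extracts the leading coefficient, $\text{Coeff}_{q^d}\bC(\alpha|\lambda,\emptyset,\emptyset)=(s_1s_2)^\ell\,\langle \tau_{\alpha_1}\cdots\tau_{\alpha_\ell}\,|\,\lambda\rangle_{\Hilb(\com^2,d)}$, and reduces everything to classical geometry of the Hilbert scheme of points: by Ellingsrud--Str\"omme \cite{es}, $H^*(\Hilb(\com^2,d),\mathbb{Q})$ is generated as an algebra by the tautological Chern classes $c_1,\ldots,c_{d-1}$, hence (via Grothendieck--Riemann--Roch) by $\tau_1,\ldots,\tau_{d-1}$; since $H^{>2(d-1)}(\Hilb(\com^2,d),\mathbb{Q})=0$, monomials of total size at most $d-1$ in the $\tau_c$ span additively, and nondegeneracy of the $(\com^*)^2$-equivariant intersection pairing then yields rank $\mathcal{P}(d)$. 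The second route --- structurally the closest to yours --- makes your triangularity claim precise and effective: restrict to the $\mathcal{P}(d)$ partitions $\alpha(\gamma)$ obtained by subtracting $1$ from each part of a partition $\gamma$ of $d$; the leading $q^d$ matrix is triangular in the length ordering, and its diagonal entries are nonzero by the single explicit evaluation $s_1s_2\,\langle \tau_{c-1}\,|\,(c)\rangle_{\Hilb(\com^2,d)}=1/c!$ from \cite{part2}. Without either the spanning statement or this explicit diagonal computation, your argument does not go through; with either one supplied, your framework collapses to the paper's second proof.
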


\begin{proof}
Consider the Hilbert scheme of points ${\text{Hilb}}(\com^2,d)$ of the
plane.
Let
${\mathbb{F}}$ be the universal quotient sheaf on 
$$\text{Hilb}(\com^2,d) \times \com^2,$$ and define the 
descendent{\footnote{The Chern character of $\mathbb{F}$
is properly supported over ${\text{Hilb}}(\com^2,d)$.} }
\begin{equation}\label{mrr}
\tau_c=\pi_*\Big( {\text{ch}}_{2+c}({\mathbb{F}})\Big) 
\in H^{2c}({\text{Hilb}}(\com^2,d),\mathbb{Q})
\end{equation}
where $\pi$ is the projection
$$\pi: \text{Hilb}(\com^2,d) \times \com^2 \rightarrow
\text{Hilb}(\com^2,d)\ .
$$
Define the tautological sheaf by
$$\pi_*(\mathbb{F}) =\mathcal{E}_d \rightarrow \text{Hilb}(\com^2,d) .$$
The tautological sheaf is a rank $d$ vector bundle with Chern
classes
\begin{equation}\label{jr45}
c_1, \ldots, c_{d-1} \in H^{*}({\text{Hilb}}(\com^2,d),\mathbb{Q}).
\end{equation}
Since $\mathcal{E}_d$ has a nonvanishing section, $c_d$ vanishes.

By a basic result of Ellingsrud and Str\"omme  \cite{es},   
$H^*({\text{Hilb}}(\com^2,d),\mathbb{Q})$
is generated as an algebra by the classes \eqref{jr45}. 
Hence, by Grothendieck-Riemann-Roch, 
$H^*({\text{Hilb}}(\com^2,d),\mathbb{Q})$
is also generated by 
$\tau_1, \ldots, \tau_{d-1}$.
Since 
$$H^{>2(d-1)}({\text{Hilb}}(\com^2,d),\mathbb{Q})=0,$$
graded homogeneous polynomials of degree at most $d-1$
in the $\tau_c$ span $H^*({\text{Hilb}}(\com^2,d),\mathbb{Q})$
additively.

We examine the leading $q^d$ coefficients
of the matrix coefficients,
\begin{equation}\label{lllw}
\text{Coeff}_{q^d}\Big( \bC(\alpha|\lambda,\emptyset,\emptyset)\Big) =
(s_1s_2)^\ell
\Big\langle \tau_{\alpha_1}\ldots \tau_{\alpha_\ell} \ \Big| \ \lambda 
\Big\rangle_{{\text{Hilb}}(\com^2,d)}
\end{equation}
where the bracket on the right denotes
the $(\com^*)^2$-equivariant
intersection pairing 
on the Hilbert scheme. 
On the left side of the bracket are the
classes $\tau_c$ defined above, and on the right side are
the Nakajima basis elements.
Since the $(\com^*)^2$-equivariant intersection
pairing is non-degenerate, we need only show 
graded homogeneous polynomials of degree at most $d-1$
in the classes $\tau_c$ span 
$$H^*_T({\text{Hilb}}(\com^2,d),\mathbb{Q})\otimes \Q(s_1,s_2).$$
The result follows from the non-equivariant spanning
statement since
$$\text{dim}_{\mathbb{Q}}\ H^*({\text{Hilb}}(\com^2,d),\mathbb{Q})=
\text{dim}_{\mathbb{Q}(s_1,s_2)}\ 
H^*_T({\text{Hilb}}(\com^2,d),\mathbb{Q})\otimes \Q(s_1,s_2)
=\mathcal{P}(d)\ . $$

Another approach to Proposition \ref{gbb5} is explained in
Proposition 9 of \cite{part1}.
To each partition $\gamma$ of $d$, we associate a
partition $\alpha(\gamma)$ of at most $d-1$ by removing
1 from each part of $\gamma$.
The leading $q^d$ term of the
square matrix 
$$\bC(\alpha(\gamma)|\lambda,\emptyset,\emptyset)$$
 is easily seen to be triangular when ordered
by length of partition. The
diagonal elements can be shown to not vanish by the single
calculation
\begin{equation}\label{appp}
 s_1s_2\ \Big\langle \tau_{c-1} \ \Big| \ (c) \Big\rangle _
{{\text{Hilb}}(\com^2,d)}
= \frac{1}{c!}\ 
\end{equation}
which appears in \cite{part2}.
\end{proof}

\section{Capped descendent 3-leg vertex}\label{3ldv}

\subsection{Induction strategy}
We will prove Theorem \ref{ttt} for capped 3-leg descendent
vertices by induction  by a method parallel to the 2-leg case.
We may assume 3-leg
vertices are of the form
$$\bC(\alpha | \lambda,\mu,\nu), \ \ \ \  |\lambda|\geq |\mu|\geq |\nu| \ . $$
We know $\bC(\alpha | \lambda,\mu,\nu)$ is the Laurent
expansion in $q$ of a rational function if
\begin{equation}\label{tnn5}
\alpha=\emptyset\ \ \ \text{or}  \ \ \ \nu = \emptyset\ .
\end{equation}
The latter case uses the established rationality for the capped 2-leg
descendent
vertex.

Define a partial ordering on  capped 3-leg descendent
via the
lexicographic
ordering on the vectors $(|\alpha|,|\nu|,|\mu|,|\lambda|)$.
To prove Theorem \ref{ttt} in the 3-leg case for
$\bC(\alpha | \lambda,\mu,\nu)$, we assume
rationality holds for all vertices
$\bC(\alpha' | \lambda',\mu',\nu')$
occuring earlier in the partial ordering.
The ground cases of the induction are \eqref{tnn5}
so we may assume $|\alpha|,|\nu|>0$.
Again, our strategy depends
upon
whether $|\alpha|<|\lambda|$ or $|\alpha|\geq |\lambda|$.

\subsection{ Case $|\alpha|<|\lambda|$}

\subsubsection{$\bA_2$ geometry} \label{angeo2}
Let $\bA_2\subset \bF$ 
be any nonsingular projective 
toric compactification.
We will only be interested in the 
two $(-2)$-curves of $\bA_2$,
$$C,\CC \subset \bA_2\ .$$
No other curves of $\bF$ will play a role in the
construction.

Let $\bbullet,\star,\bullet\in \bA_2$ be the $(\com^*)^2$-fixed points.
The curve $\CC$ connects $\bbullet$ to $\star$ and $C$ connects
$\star$ to $\bullet$.
The other $(\com^*)^2$-fixed points in $\bF\setminus \bA_2$
will not play an important role.

Consider the nonsingular projective toric variety  $\bF \times \Pp$.
The 3-torus 
$$\T=(\com^*)^3$$
 acts on $\bF$ via the first two factors and
acts on $\Pp$ via the third factor with tangent weights $s_3$ and $-s_3$
at the points $0,\infty\in \Pp$ respectively.
Let
$$\bD_0 = \bF \times \{0\}, \ \ \bD_\infty = \bF \times \{\infty\}$$
be $\T$-invariant divisors of $\bF_2 \times \Pp$.
The 3-fold $\bF\times \Pp$ has six important
 $\T$-fixed points which we denote by
$$\bbullet_0,\star_0,\bullet_0,
\bbullet_\infty,\star_\infty,\bullet_\infty
\in
\bF\times\Pp $$
where the subscript indicates the coordinate in $\Pp$.

Let $L_\infty \subset \bF\times \Pp$ be the $\T$-invariant line
connecting $\star_\infty$ to $(\bF\setminus \bA_2)_\infty$. 
We have
$$H_2(\bF\times \Pp,\mathbb{Z}) \supset \mathbb{Z}[C] \oplus
\mathbb{Z} [\CC] \oplus
\mathbb{Z}  [P]$$
where $P$ is the fiber of the projection to $\bF$.

\subsubsection{Integration}
We will find relations which express
$\bC(\alpha|\lambda, \mu,\nu)$ in terms
of inductively treated vertices.
Let $\mu'$ and $\nu'$ be {any} partitions. The relations will
be obtained from
vanishing stable pairs invariants of 
 $\bF\times \Pp/\bD_\infty$
in curve class
$$\beta = |\mu|\cdot [C]+ |\nu|\cdot [\CC]+ 
(|\lambda|+|\mu'|+|\nu'|)\cdot [P] \ . $$
The virtual dimension of the associated moduli space is 
$$\text{dim}^{vir}\ P_n(\bF\times \Pp/\bD_\infty,\beta) 
= 2|\lambda|+2|\mu'| +2|\nu'|\ .$$

We impose
relative conditions along $\bD_\infty$
in the Nakajima basis by the partition
$\lambda \cup \mu'\cup \nu'$
weighted by
$[L_\infty]\in H^2_\T(\bD_\infty,\mathbb{Q})$ for the parts of $\lambda$
and  
$$[\bullet_\infty],[\bbullet_\infty]\in H^4_\T(\bD_\infty,\mathbb{Q})$$
 for the parts
of $\mu'$ and $\nu'$ respectively.
We denote the relative condition by $\mathsf{r}(\lambda,\mu',\nu')$.
After imposing $\mathsf{r}(\lambda,\mu',\nu')$,
the virtual dimension drops to
$$\text{dim}^{vir}\ 
P_n(\bF\times \Pp/\bD_\infty,\beta)_{\mathsf{r}(\lambda,\mu',\nu')} 
= |\lambda|+ |\mu'|-\ell(\mu')+
|\nu'|-\ell(\nu')
 \geq |\lambda|\ .$$

We now specify the descendent insertion{\footnote{Unlike
the 2-leg case, the descendent $\tau_{\alpha_1}$
is not treated separately.}} by
$$\prod_{i=1}^\ell \tau_{\alpha_i}([D_0]) \ . $$
The descendent insertion imposes $|\alpha|$ conditions.
Therefore, the integral
\begin{equation}\label{hhhooo2}
\int_{ [P_n(\bF\times \Pp/\bD_\infty,\beta)_{\mathsf{r}(\lambda,\mu',\nu')}]^{vir}}
\prod_{i=1}^\ell \tau_{\alpha_i}([D_0])\ ,
\end{equation}
viewed as $\T$-equivariant push-forward to a point,
has dimension at least 
$$|\lambda|-|\alpha|  > 0.$$
Hence, we conclude the following vanishing.

\begin{Proposition} The $\T$-equivariant integral 
\eqref{hhhooo2} vanishes for all $n$. \label{ggh225}
\end{Proposition}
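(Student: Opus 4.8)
The plan is to obtain the vanishing by a direct dimension count combined with the compactness of the moduli space, in exactly the spirit of the proof of Proposition~\ref{ggh22} and in deliberate contrast to the subtler holomorphic-symplectic argument that was forced in Proposition~\ref{ggh2}. The key point is that in the 3-leg case every descendent is paired with the divisor class $[D_0]$, so the residual dimension of the push-forward comes out \emph{strictly} positive rather than merely nonnegative, and the crude vanishing will suffice.

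First I would record that $\bA_2\subset \bF$ was chosen to be a nonsingular projective toric compactification, so the ambient 3-fold $\bF\times\Pp$ is projective and the relative moduli space $P_n(\bF\times \Pp/\bD_\infty,\beta)_{\mathsf{r}(\lambda,\mu',\nu')}$ is compact for every $n$. For a compact moduli space the $\T$-equivariant push-forward of a class to a point lands in the polynomial ring $\mathbb{Q}[s_1,s_2,s_3]$, with no denominators needed, and it is homogeneous of cohomological degree equal to the complex codimension of the integrand minus the virtual dimension.

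Next I would assemble the dimension count already carried out before the statement. After imposing $\mathsf{r}(\lambda,\mu',\nu')$ the virtual dimension is at least $|\lambda|$, while the insertion $\prod_{i=1}^\ell\tau_{\alpha_i}([D_0])$ has complex codimension exactly $|\alpha|$: each factor $\tau_{\alpha_i}([D_0])$ imposes $\alpha_i$ conditions precisely because $[D_0]$ is a divisor class, which is the difference from the 2-leg geometry where the single descendent $\tau_{\alpha_1}([L_0])$ against a line class imposed one extra condition. Since we are in the regime $|\alpha|<|\lambda|$, the integrand has codimension strictly below the virtual dimension, so the push-forward in \eqref{hhhooo2} is a homogeneous element of $\mathbb{Q}[s_1,s_2,s_3]$ of strictly negative degree and therefore vanishes identically in $n$.

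I do not expect a genuine obstacle in this step. Unlike Proposition~\ref{ggh2}, where the residual dimension could be $0$ and one had to localize and then specialize at $s_1+s_2=0$ using that $\bA_1$ is holomorphic symplectic, here the strict inequality $|\lambda|-|\alpha|>0$ makes the soft compactness vanishing conclusive. The only items worth checking carefully are the compactness/projectivity claim and the bookkeeping that a descendent against the divisor $[D_0]$ contributes $\alpha_i$ rather than $\alpha_i+2$ conditions; both are routine. The holomorphic-symplectic structure of $\bA_2$ plays no role in this vanishing and will instead be needed in the subsequent capped-localization analysis of the relation that this proposition produces.
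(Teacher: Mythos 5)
Your proposal is correct and is essentially the paper's own argument: since $|\alpha|<|\lambda|$, the residual dimension $|\lambda|-|\alpha|$ is strictly positive, and compactness of $P_n(\bF\times \Pp/\bD_\infty,\beta)_{\mathsf{r}(\lambda,\mu',\nu')}$ forces the $\T$-equivariant push-forward, a homogeneous element of $\mathbb{Q}[s_1,s_2,s_3]$ of negative degree, to vanish. You also correctly identify why the subtler machinery of Proposition~\ref{ggh2} (localization followed by the specialization $s_1+s_2=0$ using the holomorphic-symplectic structure) is unnecessary here: with every descendent paired against the divisor class $[D_0]$, the dimension count never degenerates to the zero-dimensional boundary case.
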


\subsubsection{Relation}
We define the $\T$-equivariant series 
\begin{equation*}
\bZ_\beta\Big(\alpha,\lambda,\mu',\nu'\Big)
=
\sum_{n}  q^n
\int_{ [P_n(\bF\times \Pp/\bD_\infty,\beta)_{\mathsf{r}(\lambda,\mu',\nu')}]^{vir}}
\prod_{i=1}^\ell \tau_{\alpha_i}([D_0])\ 
\end{equation*}
obtained from the integrals \eqref{hhhooo2}.
By Proposition \ref{ggh225}, the series 
$\bZ_\beta\Big(\alpha,\lambda,\mu',\nu'\Big)$
vanishes identically.
We will calculate the left side of
\begin{equation}\label{ttx2}
\bZ_\beta\Big(\alpha,\lambda,\mu',\nu'\Big)=0
\end{equation}
by capped localization to obtain a relation constraining
capped descendent vertices.

Two types of capped rubber contributions arise
over $\infty\in \Pp$ in the 
$\T$-equivariant localization formula for
$\bZ_\beta\Big(\alpha,\lambda,\mu',\nu'\Big)$,
\begin{enumerate}
\item[(i)] capped 1-leg rubber corresponding to $\T$-fixed stable pairs
with components contracted to $\bF \setminus \bA_2$,
\item[(ii)] capped rubber contributions of $\bA_2 \times \Pp$ relative to the
divisor over $\infty$.
\end{enumerate}
The capped contributions (i) are just  1-leg  with no
descendents, so are rational.
The capped contributions (ii) are proven to be rational in 
Lemma 6 of \cite{moop} relying on the results of \cite{mo1,mo2}.
See Section 5 of \cite{mpt} for the stable pairs results.

We now analyze the capped localization 
of $\bZ_\beta\Big(\alpha,\lambda,\mu',\nu'\Big)$
over $0\in \Pp$. As before,
a term in the capped localization formula is 
{principal} if not all the capped descendent
vertices which arise are known inductively to be rational.

First consider the descendent insertions.
The descendents 
$$\prod_{i=1}^\ell \tau_{\alpha_i}([D_0]) \ $$
are free to distribute to the $\T$-fixed points over $0\in \Pp$.
By the choice of $\beta$, a capped 3-leg descendent 
vertex can only occur at ${\star}_0$.
Descendents which distribute away from to ${\star}_0$ 
will lie on capped 1-leg or 2-leg descendent
vertices.
We conclude
all capped 
descendent vertices are known inductively to be rational except possibly 
when all the descendents lie on $\star_0$.

Next consider the edge degree $\widehat{d}$ of $\CC$ over $0\in \Pp$ in
the capped localization formula.
If $\widehat{d}<|\nu|$, then the capped descendent vertex at $\star_0$
is known inductively to be rational since the minimal relative partition is
lower. Hence, we restrict ourselves to the principal terms where 
$\widehat{d}=|\nu|$.

Similarly, consider the edge degree ${d}$ of $C$ over $0\in \Pp$ in
the capped localization formula.
If ${d}<|\mu|$, then the capped descendent vertex at $\star_0$
is known inductively to be rational since the middle relative partition is
lower. Hence, we further restrict ourselves to the principal terms where 
${d}=|\mu|$.

Since all of $|\mu|\cdot [C]+|\nu|\cdot [\CC]$ occurs over $0\in \Pp$, the
rubber over $\infty\in\Pp$ is all 1-leg.
By the relative conditions imposed by $\nu'$ with weights
$[\bbullet_\infty]$, the principal terms all have a
capped 2-leg vertex with no descendents at
 $\bbullet_0$ with outgoing 
partitions of size $|\nu|$ and $|\nu'|.$
Similarly, the principal terms all have a
capped 2-leg vertex with no descendents at
 $\bullet_0$ with outgoing 
partitions of size $|\mu|$ and $|\mu'|.$

Finally, consider the relative conditions $\lambda$ weighted
by $[L_\infty]$.
The weight allows the parts to distribute to $\bF\setminus \bA_2$.
Such a distribution would result in an inductively treated
capped descendent vertex at $\star_0$ with lower maximal
relative partition.

In the principal terms of the
 capped localization of
 \eqref{ttx2}, precisely the following set of
capped 3-leg descendent vertices occur at $\star_0$:
\begin{equation}\label{bxs2}
\Big\{ \ \bC(\alpha|{\lambda},\widehat{\mu},\widehat{\nu})\ \Big| \ 
|\widehat{\mu}|=|\mu|, \
|\widehat{\nu}|=|\nu|
 \ \Big\}.
\end{equation}
The induction step is established as before if
the set of equations as $\mu'$ and $\nu'$ varies has maximal 
rank{\footnote{Maximal rank here is
equal to the number $\mathcal{P}(|\mu|)\cdot \mathcal{P}(|\nu|)$.}} 
with respect to the unknowns \eqref{bxs2}.
Again, the maximal rank statement follows from
Lemma 8 of \cite{moop}.
\qed

\subsection{ Case $|\alpha|\geq |\lambda|$}

\subsubsection{Integration}
We will find relations which express
$\bC(\alpha|\lambda, \mu,\nu)$ in terms
of inductively treated vertices.
Let $k$ be a positive integer satisfying
$$k >  3|\alpha| + 3|\lambda|+3|\mu|+3|\nu|\ .$$
The relations will
be obtained from
vanishing stable pairs invariants of 
the nonsingular projective toric
3-fold $\bF_k\times \Pp$
in curve class
$$\beta = |\nu|\cdot [C_0^+] + |\mu|\cdot [L_0^\star]+|\lambda|\cdot
[P_{\star^+}] \  $$
following the conventions of Section \ref{wrrt}.
The virtual dimension of the associated moduli space is 
$$\text{dim}^{vir}\ P_n(\bF_k\times\Pp, \beta) > 
3|\alpha| + 3|\lambda|+3|\mu|+3|\nu|\ $$
since we easily compute
$$c_1(\bF_k\times \Pp) \cdot [C^+_0] = k+2\ , \ \ \ 
c_1(\bF_k\times \Pp) \cdot [L^\star_0] = 2, \ \ \ 
c_1(\bF_k\times \Pp) \cdot [P_{\star^+}] =2.$$

Our relations will be parameterized by three partitions 
$\lambda',\mu',\nu'$ 
which satisfy
$$|\lambda'|\leq|\lambda|-1, \ \ \ |\mu'|\leq |\mu|-1 , \ \ \
|\nu'|\leq |\nu|-1 
\ .$$
Consider the descendent insertion
\begin{equation}\label{nyr2}
\prod_{i=1}^{\ell(\alpha)} \tau_{\alpha_i}([\star^+_0]) \cdot
\prod_{i=1}^{\ell(\lambda')} \tau_{\lambda'_i}([\star^+_\infty]) 
\cdot \prod_{i=1}^{\ell(\mu')} \tau_{\mu'_i}([\star^-_0]) \cdot
\prod_{i=1}^{\ell(\nu')} \tau_{\nu'_i}([\bullet^+_0]) \cdot
\ .
\end{equation}
Since $\tau_r(\mathsf{p})$ imposes $r+2$ conditions,
the descendent insertion \eqref{nyr2} imposes at most
$$3|\alpha|+ 3|\lambda'|+ 3|\mu'|+3|\nu'|$$ 
conditions.

\begin{Proposition} The $\T$-equivariant integral 
\begin{equation*}
\int_{ [P_n(\bF_k\times \Pp,\beta)]^{vir}}
\prod_{i=1}^{\ell(\alpha)} \tau_{\alpha_i}([\star^+_0]) \cdot
\prod_{i=1}^{\ell(\lambda')} \tau_{\lambda'_i}([\star^+_\infty])\cdot  
\prod_{i=1}^{\ell(\mu')} \tau_{\mu'_i}([\star^-_0]) \cdot
\prod_{i=1}^{\ell(\nu')} \tau_{\nu'_i}([\bullet^+_0]) \cdot
\end{equation*}
vanishes for all $n$. \label{ggh223}
\end{Proposition}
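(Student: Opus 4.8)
The plan is to establish the vanishing by a pure dimension count, exactly parallel to the 2-leg case of Proposition \ref{ggh22}. Since $\bF_k\times\Pp$ is a nonsingular \emph{projective} toric 3-fold, the moduli space $P_n(\bF_k\times\Pp,\beta)$ is projective, so the $\T$-equivariant integral is an honest push-forward to a point valued in $\mathbb{Q}[s_1,s_2,s_3]$, with no residue regularization needed. Such a push-forward of a class of strictly positive complex dimension must vanish, since it would land in negative cohomological degree over a point. It therefore suffices to check that the integrand class has positive dimension.

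First I would bound the codimension imposed by the descendent insertion \eqref{nyr2}. Each factor $\tau_r(\mathsf{p})$ at a $\T$-fixed point imposes $r+2$ conditions, so the total number of conditions is
$$\sum_i(\alpha_i+2)+\sum_i(\lambda'_i+2)+\sum_i(\mu'_i+2)+\sum_i(\nu'_i+2).$$
This equals $|\alpha|+2\ell(\alpha)+|\lambda'|+2\ell(\lambda')+|\mu'|+2\ell(\mu')+|\nu'|+2\ell(\nu')$, and since $\ell(\cdot)\le|\cdot|$ for every partition it is at most $3|\alpha|+3|\lambda'|+3|\mu'|+3|\nu'|$, as already recorded after \eqref{nyr2}.

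Next I would invoke the virtual dimension estimate already established in the setup: because $k>3|\alpha|+3|\lambda|+3|\mu|+3|\nu|$ and $c_1(\bF_k\times\Pp)$ pairs with $[C^+_0]$, $[L^\star_0]$, $[P_{\star^+}]$ as $k+2$, $2$, $2$ respectively, one has $\text{dim}^{vir}\,P_n(\bF_k\times\Pp,\beta)>3|\alpha|+3|\lambda|+3|\mu|+3|\nu|$. Subtracting the bound on conditions, the integrand has dimension at least
$$\big(3|\alpha|+3|\lambda|+3|\mu|+3|\nu|\big)-\big(3|\alpha|+3|\lambda'|+3|\mu'|+3|\nu'|\big)\ \ge\ 0,$$
and is in fact strictly positive: the strict virtual-dimension inequality already forces this, and independently the constraints $|\lambda'|\le|\lambda|-1$, $|\mu'|\le|\mu|-1$, $|\nu'|\le|\nu|-1$ make each grouped difference at least $3$. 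Hence the push-forward vanishes for every $n$.

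There is no genuine obstacle here. The only points requiring care are bookkeeping: that $\tau_r(\mathsf{p})$ imposes exactly $r+2$ conditions, that $\ell(\cdot)\le|\cdot|$ turns $|\alpha|+2\ell(\alpha)$ into the clean bound $3|\alpha|$, and that the choice of $k$ is large enough for the virtual dimension to dominate the number of conditions. All of these have been arranged in the setup, so the argument is immediate.
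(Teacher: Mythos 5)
Your proof is correct and follows essentially the same route as the paper: the paper's own proof of Proposition \ref{ggh223} is exactly the dimension count, subtracting the (at most) $3|\alpha|+3|\lambda'|+3|\mu'|+3|\nu'|$ conditions imposed by \eqref{nyr2} from the virtual dimension, which strictly exceeds $3|\alpha|+3|\lambda|+3|\mu|+3|\nu|$ by the choice of $k$, so the push-forward lands in strictly positive dimension and vanishes. The only difference is that you spell out the bookkeeping (the $\ell(\cdot)\le|\cdot|$ bound and the role of compactness of $P_n(\bF_k\times\Pp,\beta)$ in making the equivariant push-forward polynomial-valued) that the paper leaves implicit, having stated it in the parallel 2-leg Proposition \ref{ggh2}.
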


\begin{proof}
The integral,
viewed as $\T$-equivariant push-forward to a point,
has dimension at least 
$$3|\alpha| + 3|\lambda|+3|\mu|+3|\nu| - 
(3|\alpha|+ 3|\lambda'|+ 3|\mu'|+3|\nu'|) >0,$$
so always vanishes.
\end{proof}

\subsubsection{Relation}
We define the $\T$-equivariant series 
\begin{multline*}
\bZ_\beta\Big(\alpha,\lambda',\mu',\nu'\Big)
=
\sum_{n}  q^n
\int_{ [P_n(\bF_k\times \Pp,\beta)]^{vir}}
\prod_{i=1}^{\ell(\alpha)} \tau_{\alpha_i}([\star^+_0]) \cdot
\prod_{i=1}^{\ell(\lambda')} \tau_{\lambda'_i}([\star^+_\infty]) \\ \cdot 
\prod_{i=1}^{\ell(\mu')} \tau_{\mu'_i}([\star^-_0]) \cdot
\prod_{i=1}^{\ell(\nu')} \tau_{\nu'_i}([\bullet^+_0]) \cdot
\end{multline*}
By Proposition \ref{ggh223}, 
the series $\bZ_\beta\Big(\alpha,\lambda',\mu',\nu'\Big)$
vanishes identically.
We will calculate the left side of
\begin{equation}\label{ttxx2}
\bZ_\beta\Big(\alpha,\lambda',\mu',\nu'\Big)=0
\end{equation}
by capped localization to obtain a relation constraining
capped descendent vertices.

Capped descendent vertices at $\T$-fixed points other than
$\star^+_0$ will have descendent partition of size less than
$|\alpha|$ and thus are known inductively to be rational.
Consider the capped 3-leg descendent vertex at $\star^+_0$.
Let $d_C$ be the associated edge degree of $C^+_0$ in
the capped localization formula.
By the geometry of $\bF_k$, the class
$$\beta - d_C[C^+_0]\in H_2(\bF_k,\mathbb{Z})$$
is not effective if $d_C>|\nu|$.
If $d_C<|\nu|$, then the vertex at $\star_0$
is known inductively to be rational since the minimal relative partition is
lower. Hence, we restrict ourselves to the principal terms where $d_C= |\nu|$.

Consider the edge degree $d_L$ of $L^\star_0$.
If $d_L<|\mu|$, then the vertex at $\star_0$
is known inductively to be rational since the middle relative partition is
lower. As before, $d_L>|\mu|$ is not permitted.
We restrict ourselves to the principal terms where $d_L= |\mu|$.

Finally, consider the edge degree $d_P$ of $P_{\star^+}$.
If $d_P<|\lambda|$, then the vertex at $\star_0$
is known inductively to be rational since the maximal relative partition is
lower. Again, $d_P>|\lambda|$ is not permitted.
We restrict ourselves to the principal terms where $d_P= |\lambda|$.

In the principal terms of the
 capped localization of
 \eqref{ttxx2}, precisely the following set of
capped 3-leg descendent vertices occur at $\star^+_0$:
\begin{equation*}
\Big\{ \ \bC(\alpha|\widehat{\lambda},\widehat{\mu},\widehat{\nu})\ \Big| \ 
|\widehat{\lambda}|=|\lambda|,\ 
|\widehat{\mu}|=|\mu|,\ |\widehat{\nu}|=|\nu| \ 
  \Big\}.
\end{equation*}
In the principal terms, the vertex
$\bC(\alpha|\widehat{\lambda},\widehat{\mu},\widehat{\nu})$ at $\star^+_0$,
is accompanied by capped edges along 
$C_0^+$, $L^\star_0$, and $P_{\star^+}$\ . 
Finally, there are
capped 1-leg descendent vertices 
$$\bC(\lambda'|\widehat{\lambda}',\emptyset,\emptyset), \ \ \
\bC(\mu'|\widehat{\mu}',\emptyset,\emptyset),\ \ \ 
\bC(\nu'|\widehat{\nu}',\emptyset,\emptyset),
$$
at $\star_{\infty}^+$, $\star_0^-$, and $\bullet_0^+$ respectively
The required maximal rank condition follows from
Proposition \ref{gbb5}. \qed

\section{Log Calabi-Yau geometry}\label{lcy}

\subsection{ Relative/descendent correspondence}
\label{jj5f}

Let $X$ be a nonsingular projective 3-fold, and let 
$S\subset X$ be a nonsingular anti-canonical $K3$ surface.
Let 
$N$
be the normal bundle of $S$ in $X$.
Let
$$S_0,S_\infty \subset \mathbf{P}(\OO_S \oplus N)\rightarrow S$$
be the sections determined by the summand $\OO_S$ and $N$
respectively. Let
$$\iota_0: S \hookrightarrow \mathbf{P}(\OO_S \oplus N)$$
be the section onto $S_0$.

Let $\mathcal{B}$ be a fixed self-dual basis of the cohomology of $S$.
Recall a Nakajima basis element in the Hilbert scheme 
$\text{Hilb}(S,n)$ is a cohomology weighted partition $\mu$ of $n$,
$$( (\mu_1,\gamma_1), \ldots, (\mu_\ell,\gamma_\ell))\ ,  \ \ \
n=\sum_{i=1}^\ell \mu_i, \ \ \ \gamma_i \in \mathcal{B}\ . $$
Such a weighted partition determines a descendent
insertion
$$\tau[\mu]=\prod_{i=1}^\ell \tau_{\mu_i-1}(\iota_{0*}(\gamma_i)) \ .$$

By $K3$ vanishing arguments explained in Section \ref{g669},
the stable pairs invariants of the relative 3-fold geometry
$\mathbf{P}(\OO_S \oplus N)/S_\infty$ are nontrivial only for
curves classes in the fibers of
\begin{equation}\label{vvgg}
\mathbf{P}(\OO_S \oplus N)/S_\infty\rightarrow S \ .
\end{equation}
Define the partition function for the relative geometry by
\begin{equation}\label{fq22}
\bZ_d^{\mathbf{P}(\OO_S \oplus N)/S_\infty}\Big(\mu,\nu\Big)
=
\sum_{n}  
\Big\langle \tau[\mu] \ \Big|\ \nu\
\Big\rangle_{\!n,d} 
^{\mathbf{P}(\OO_S \oplus N)/S_\infty}
q^n.
\end{equation}
Here, $\mu$ and $\nu$ are partitions of $d$ weighted
by $\mathcal{B}$. The  curve class
is $d$ times a fiber of \eqref{vvgg}.
By further vanishing, only the leading $q^d$ terms of \eqref{fq22}
are possibly nonzero.

\begin{Proposition}\label{dgbb5}
Let $d>0$ be an integer.
The square matrix indexed by $\mathcal{B}$-weighted partitions
of $d$ with coefficients
$$\bZ_d^{\mathbf{P}(\OO_S \oplus N)/S_\infty}
\Big(\mu,\nu\Big)$$
has maximal rank.
\end{Proposition}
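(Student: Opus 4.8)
The plan is to adapt the proof of Proposition~\ref{gbb5}, replacing the equivariant Hilbert scheme $\Hilb(\com^2,d)$ by the projective Hilbert scheme $\Hilb(S,d)$ of the $K3$ surface $S$. As observed after \eqref{fq22}, only the leading $q^d$ coefficient of each matrix entry can be nonzero, so it is enough to prove that the matrix of leading coefficients
$$
M_{\mu,\nu}=\mathrm{Coeff}_{q^d}\Big(\bZ_d^{\mathbf{P}(\OO_S\oplus N)/S_\infty}(\mu,\nu)\Big)
$$
is invertible. The first step is to identify $M_{\mu,\nu}$ with an intersection pairing on $\Hilb(S,d)$, exactly as \eqref{lllw} identifies the leading coefficient of $\bC(\alpha|\lambda,\emptyset,\emptyset)$ with a pairing on $\Hilb(\com^2,d)$.

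For a curve class equal to $d$ times a fiber of \eqref{vvgg}, a relative stable pair contributing at leading $q^d$ order is forced to meet the zero section $S_0$ in a length-$d$ subscheme, so the relevant moduli reduces to $\Hilb(S,d)$, the rubber geometry over $S_\infty$ being absorbed into the boundary condition. Under this reduction the relative insertion $\nu$ becomes the Nakajima basis element $\mathsf{N}[\nu]\in H^*(\Hilb(S,d),\Q)$, while the descendent $\tau[\mu]=\prod_i \tau_{\mu_i-1}(\iota_{0*}\gamma_i)$ becomes a tautological descendent class $\mathsf{D}[\mu]$ assembled from $\mathrm{ch}_{\mu_i+1}$ of the universal quotient sheaf paired against the $\gamma_i$. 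Thus I expect
$$
M_{\mu,\nu}=c_\mu\cdot\Big\langle\,\mathsf{D}[\mu]\ \big|\ \mathsf{N}[\nu]\,\Big\rangle_{\Hilb(S,d)}
$$
for explicit nonzero constants $c_\mu$ arising from the normal and fiber directions.

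It then remains to show this pairing matrix is non-degenerate. Since $S$ is $K3$, the variety $\Hilb(S,d)$ is smooth, projective, and holomorphic symplectic, so its Poincar\'e pairing is non-degenerate; as the $\mathsf{N}[\nu]$ range over a basis, it suffices to show that $\{\mathsf{D}[\mu]\}$ is obtained from the Nakajima basis by an invertible change of basis. Following the second approach to Proposition~\ref{gbb5}, I would order weighted partitions by length and invoke the Lehn-type calculus for Chern characters of tautological sheaves: $\tau_{c-1}(\iota_{0*}\gamma)$ acts on the Fock space as $\tfrac{1}{c!}\,\mathfrak{p}_{-c}(\gamma)$ up to corrections of strictly smaller length, the leading constant being the $K3$ analogue of \eqref{appp}. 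Hence $\mathsf{D}[\mu]=\tfrac{1}{\prod_i\mu_i!}\,\mathsf{N}[\mu]+(\text{shorter length})$, so the transition matrix is block-triangular in the length, with diagonal blocks governed by the intersection form of $S$ distributed over the weights $\gamma_i$. The self-duality of $\mathcal{B}$ together with the non-degeneracy of the $K3$ intersection form makes these diagonal blocks invertible, and composing with the Poincar\'e pairing yields the maximal rank claim. The main obstacle is the first step: with no torus acting on $S$, the reduction of the leading $q^d$ coefficient to a pairing on $\Hilb(S,d)$ cannot be read off an equivariant vertex as in \eqref{lllw}, but must be extracted intrinsically from the stable pairs geometry of $\mathbf{P}(\OO_S\oplus N)/S_\infty$, keeping careful track of the cohomology weights $\gamma_i$ and of the precise constants $c_\mu$ in order to confirm non-vanishing of the diagonal.
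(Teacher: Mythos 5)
Your proposal is correct in substance and follows the paper's overall strategy for the reduction, but it argues the key non-degeneracy step by a genuinely different route. Like the paper, you reduce to the leading $q^d$ coefficient and identify it with a classical pairing on $\Hilb(S,d)$; the paper makes this identification cleanly via the observation that the minimal Euler characteristic moduli space $P_d(\mathbf{P}(\OO_S\oplus N)/S_\infty, dF)$ is \emph{canonically isomorphic} to $\Hilb(S,d)$ (a length-$d$ subscheme $Z\subset S$ corresponds to the stable pair $\OO \to \OO_{\epsilon^{-1}(Z)}$), so the coefficient is literally the integral \eqref{gre44} with no extra constants $c_\mu$ --- this disposes of what you flagged as the ``main obstacle,'' and in any case nonzero row constants would not affect the rank. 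Where you diverge is in proving the pairing matrix invertible: you invoke the Lehn/Li--Qin--Wang operator calculus, expanding $\tau_{c-1}(\iota_{0*}\gamma)$ in Nakajima operators with leading term $\frac{1}{c!}\mathfrak{p}_{-c}(\gamma)$, whereas the paper gives an elementary geometric argument --- pushing cycles to $\Sym^d(S)$ and comparing the dimension of $\epsilon(\nu)$ with the codimension of the support of $\tau[\mu]$ to get vanishing for $\ell(\mu)>\ell(\nu)$, then matching weights by duality at equal length, with the diagonal pinned down by the single computation \eqref{appp}. Your route buys a structural statement (the full change of basis between descendents and Nakajima classes) at the cost of importing heavier machinery, while the paper's support argument is self-contained and, as the paper notes, works verbatim for \emph{any} nonsingular projective surface; note also that your expansion should read $\mathsf{D}[\mu]=\frac{1}{\prod_i\mu_i!}\mathsf{N}[\mu]+(\text{longer length terms})$ --- the corrections have length strictly \emph{greater} than $\ell(\mu)$, since the paper's vanishing says $\tau[\mu]$ pairs trivially with all $\nu$ of smaller length (harmless, since either direction gives triangularity under a suitable ordering), and your appeal to the holomorphic symplectic structure of $\Hilb(S,d)$ is unnecessary: non-degeneracy of the Poincar\'e pairing holds for any smooth projective variety, and nothing about $K3$ surfaces is actually used.
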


\begin{proof}
We need only consider the leading $q^d$ coefficients to
prove the maximal rank statement.
By standard arguments,
dimension constraints imply the matrix is upper triangular with
respect to a suitable length ordering. The diagonal elements
are non-zero by the calculation \eqref{appp}.

More precisely, the 
lowest Euler characteristic 
moduli space of stable pairs
$$P_{d}(\mathbf{P}(\OO_S \oplus N)/S_\infty,dF)$$
in $d$ times the fiber class is canonically isomorphic to
$\text{Hilb}(S, d)$.
The $q^d$ coefficients of $\bZ_d^{\mathbf{P}(\OO_S \oplus N)/S_\infty}
\Big(\mu,\nu\Big)$ are simply the pairings{\footnote{See Lemma
of \cite{part1} for an equivariant study the same pairing for
$\text{Hilb}(\com²,d)$.}}  of
the descendents
$\tau[\mu]$ with the Nakajima basis elements $\nu$,
\begin{equation} \label{gre44}
\int_{{\text{Hilb}}(S,d)} \tau[\mu] \cdot \nu\ .
\end{equation}

The codimension of the class $\tau[\mu]$ on $\text{Hilb}(S,d)$
is 
$$d-\ell(\mu)+ \sum_i \text{codim}_\com (\gamma_i)\ . $$
Similarly the dimension of $\nu$ is
$$d-\ell(\nu)+ \sum_j \text{dim}_\com(\delta_j)\ $$
where the cohomology weights of $\nu$ are $\delta_j$.
Certainly the pairing \eqref{gre44}
vanishes unless
\begin{equation}\label{tr55}
\sum_i \text{codim}_\com (\gamma_i) = 
\sum_j \text{dim}_\com(\delta_j) + \ell(\mu) - \ell(\nu)\ .
\end{equation}
We first establish the vanishing 
\begin{equation}\label{gr66}
\int_{{\text{Hilb}}(S,2d)} \tau[\mu] \cdot \nu = 0
\end{equation}
if $\ell(\mu)> \ell(\nu)$.
The proof is by considering cycles in the symmetric product
$$\epsilon: \text{Hilb}(S,d) \rightarrow \text{Sym}^{d}(S) \ .$$
The cycle $\nu$ has image{\footnote{We are considering the
actual set theoretic image for general choices
of the classes $\delta_j$.}}
under $\epsilon$ of dimension $\sum_{i}{\text{dim}_\com(\delta_j)}$.
On the other hand, $\tau[\mu]$ is supported
on $\text{Hilb}(S,d)$  over a cycle of
codimension at least  $\sum_{i}{\text{codim}_\com(\gamma_i)}$
in $\text{Sym}^d(S)$ determined just by the incidence
conditions with the cycles $\gamma_i$.
If $\ell(\mu)>\ell(\nu)$, the latter codimension
in $\text{Sym}^d(S)$ exceeds the dimension of
$\epsilon(\nu)$ in $\text{Sym}^d(S)$
by \eqref{tr55}. The corresponding cycles in $\text{Sym}^d(S)$
can then be chosen with empty incidence --- proving the
vanishing \eqref{gr66}.

If $\ell(\mu)=\ell(\nu)$, the corresponding cycles
$\text{Sym}^d(S)$ will intersect in finitely many points.
In fact, the intersection can easily be shown to vanish
unless there is a bijection $\sigma$ satisfying
$\gamma_{\sigma(j)} = \delta_j^\vee$.
Further dimension constraints then require $\mu_{\sigma(j)}=\nu_{j}$
for nonvanishing.
Finally, the diagonal pairings are determined by  \eqref{appp}.
\end{proof}

We note Proposition \ref{dgbb5} holds for any
nonsingular projective surface $S$. The proof
does not use any special properties of $K3$ surfaces.

\subsection{Rationality}
Let $X$ be a nonsingular projective toric $3$-fold with 
an anti-canonical $K3$ divisor
$$\iota:S\hookrightarrow X \ . $$
Let $\beta \in H_2(X,\mathbb{Z})$ be a curve class, and let
$$d= \int_\beta [S] \ . $$
Let $\mu$ be a $\mathcal{B}$-weighted partition of $d$. We associate
a descendent insertion{\footnote{We consider
here descendents of the cohomology $H^*(X,\mathbb{Q})$,
not the $\mathbf{T}$-equvariant cohomology.}} to $\mu$ as before,
$$\tau[\mu]=\prod_{i=1}^\ell \tau_{\mu_i-1}(\iota_{*}(\gamma_i)) \ .$$
Consider the descendent partition function of $X$,
\begin{equation}\label{fq223}
\bZ^X_\beta\Big( \mathbf{I}\cdot \tau[\mu])
\Big)
=
\sum_{n}  
\Big\langle \mathbf{I} \cdot \tau[\mu]
\Big\rangle_{\!n,\beta} 
^{X}
q^n
\end{equation}
where I is {\em any} descendent insertion for $X$.

We may degenerate $X$ to the normal cone of $S$. Then, the 
degeneration formula expresses
$\bZ^X_\beta\Big( \mathbf{I}\cdot \tau[\mu])
\Big)$ in terms of the relative geometries
$$(X/S) \ \ \text{and} \ \ (\mathbf{P}(\OO_S\oplus N), S_\infty)\ .$$

The partition function $\bZ^X_\beta\Big( \mathbf{I}\cdot \tau[\mu])
\Big)$ is rational by Theorem \ref{nnn}.
The relative theory of $(\mathbf{P}(\OO_S\oplus N), S_\infty)$
yields rational functions in $q$ by the vanishings discussed above.
The invertibility of Proposition \ref{dgbb5} applied to
the degeneration formula inductively implies
the following result (of which Theorem \ref{fff} is a special case).

\begin{Theorem}
\label{ffff} Let 
$X$ be a nonsingular projective toric 3-fold
with an anti-canonical $K3$ section $S$. The partition function
$\mathsf{Z}^{X/S}_\beta
\Big(\mathbf{I} \ \Big| \mu \Big)$ 
is the 
Laurent expansion of a rational function in $q$.
\end{Theorem}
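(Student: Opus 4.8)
The plan is to deduce the relative statement from the already–established absolute rationality of Theorem~\ref{nnn}, by degenerating $X$ to the normal cone of $S$ and inverting the resulting linear system with Proposition~\ref{dgbb5}. First I would form the degeneration of $X$ to the normal cone of $S$, whose special fibre is
$$(X/S)\ \cup_S\ \big(\mathbf{P}(\OO_S\oplus N)/S_\infty\big),$$
the two components glued along $S\cong S_0$, with the outer section $S_\infty$ of the bubble serving as the remaining relative divisor. Because $S$ is anti-canonical, the $K3$ vanishing of Section~\ref{g669} forces every nontrivial contribution on the $\mathbf{P}(\OO_S\oplus N)$ component into fibre classes of the projection \eqref{vvgg}. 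Since $\int_\beta[S]=d$, the bubble carries exactly $d$ times the fibre, and the gluing condition along $S$ is a $\mathcal B$-weighted partition of $d$, so that the relevant bubble invariants are precisely the $\bZ_d^{\mathbf{P}(\OO_S\oplus N)/S_\infty}(\mu,\nu)$ of Proposition~\ref{dgbb5}.

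Next I would apply the stable pairs degeneration formula to the absolute descendent series $\bZ^X_\beta(\mathbf I\cdot\tau[\mu])$ of \eqref{fq223}, sending the general insertion $\mathbf I$ to the $(X/S)$ component and the $S$-supported descendents $\tau[\mu]$ to the bubble, where they become descendents of $\iota_{0*}(\gamma_i)$. The principal term of the formula is
$$\bZ^X_\beta\big(\mathbf I\cdot\tau[\mu]\big)=\sum_{\eta,\nu}\bZ^{X/S}_\beta(\mathbf I\mid\eta)\,g_{\eta\nu}\,\bZ_d^{\mathbf{P}(\OO_S\oplus N)/S_\infty}(\mu,\nu),$$
where $\eta,\nu$ run over $\mathcal B$-weighted partitions of $d$ and $g$ is the nondegenerate, $q$-independent gluing pairing on $\text{Hilb}(S,d)$. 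The left-hand side is a rational function of $q$ by Theorem~\ref{nnn}; the bubble factors are concentrated in the single power $q^d$ and are therefore trivially rational; and the composite coefficient matrix $\sum_\nu g_{\eta\nu}\,\bZ_d^{\mathbf{P}(\OO_S\oplus N)/S_\infty}(\mu,\nu)$, indexed by $(\mu,\eta)$, is the product of the nondegenerate pairing $g$ with the invertible matrix of Proposition~\ref{dgbb5}, hence invertible over $\mathbb Q(q,s_1,s_2,s_3)$. Inverting it expresses each unknown $\bZ^{X/S}_\beta(\mathbf I\mid\eta)$ as a combination of rational functions, hence as a rational function, which is the assertion.

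The remaining care concerns the correction terms of the degeneration formula, in which the descendent insertions distribute otherwise between the two components; these are relative series of the same type attached to the same divisor $S$. I would run the whole reduction as an induction so arranged that the principal block is exactly the invertible matrix above while all corrections are strictly simpler and hence rational by the inductive hypothesis, after which they may be transposed to the known side of the relation. I expect the genuine content to lie not in this formal inversion but in the two inputs that make the principal matrix the correct invertible object: the $K3$ vanishing of Section~\ref{g669}, which confines the bubble to fibre classes, and the non-degeneracy of Proposition~\ref{dgbb5}, which rests on the triangularity of the leading $q^d$ matrix together with the diagonal evaluation \eqref{appp}. The main obstacle is therefore the clean choice of a well-founded induction parameter that keeps the principal block intact while rendering every correction term strictly simpler; with this granted, Theorem~\ref{fff} follows as the special case $\mathbf I=1$ with $\mu=\mathcal L_\beta$ a middle-dimensional relative class.
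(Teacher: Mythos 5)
Your proposal follows the paper's proof essentially verbatim: degeneration of $X$ to the normal cone of $S$, the $K3$ vanishing of Section \ref{g669} confining the bubble $\mathbf{P}(\OO_S\oplus N)/S_\infty$ to fiber classes (so its series are concentrated in $q^d$), rationality of the absolute side by Theorem \ref{nnn}, and inversion of the matrix of Proposition \ref{dgbb5} in the degeneration formula, run inductively over the correction terms. The paper is in fact terser than you are --- it does not spell out the induction parameter either --- and your observation that lifting $\iota_*(\gamma_i)$ to the bubble as $\iota_{0*}(\gamma_i)$ makes the only corrections come from redistributions of $\mathbf{I}$ is exactly the intended bookkeeping.
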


\subsection{$K3$ vanishing}\label{g669}
The vanishing{\footnote{
The results here were communicated to the authors by R. Thomas.}}
 of 
the stable pairs invariants of the relative 3-fold geometry
$\mathbf{P}(\OO_S \oplus N)/S_\infty$ in
all cases except for the minimal Euler characteristic in 
the fibers classes of
\begin{equation}\label{vvgg45}
\epsilon: \mathbf{P}(\OO_S \oplus N)/S_\infty\rightarrow S \ 
\end{equation}
can be seen by constructing a trivial 1-dimension
quotient of the obstruction theory, see \cite{cosec,mpt}.

We consider first the absolute geometry
$\mathbf{P}=\mathbf{P}(\OO_S \oplus N)$. To start,
fix a trivialization of the canonical bundle of the $K3$ surface $S$,
\begin{equation}\label{fvvbv}
\omega_S = \mathcal{O}_S\ .
\end{equation}
Let $T_\epsilon$ be the tangent
bundle to the fibers of $\epsilon$, and let
$$\mathcal{I}^\bullet= [\mathcal{O}_X \stackrel{s}{\rarr} F]$$
be a stable pair on $\mathbf{P}$.
 A canonical map
\begin{equation}\label{kxxns}
 T_\epsilon \rightarrow {\mathcal{E}}xt^1(\mathcal{I}^\bullet,
\mathcal{I}^\bullet)_0
\end{equation}
is obtained by infinitessimal translation along the vector field.
On the right,  ${\mathcal{E}}xt^1(\mathcal{I}^\bullet,\mathcal{I}^\bullet)_0$
is traceless sheaf Ext.
After tensoring \eqref{kxxns} 
with $\omega_\epsilon$, the dual of $T_\epsilon$, we find
\begin{equation*}
\mathcal{O}_{\mathbf{P}} \rightarrow  {\mathcal{E}}xt^1(\mathcal{I}^\bullet,
\mathcal{I}^\bullet)_0 \otimes \omega_\epsilon =
{\mathcal{E}}xt^1(\mathcal{I}^\bullet,
\mathcal{I}^\bullet \otimes \omega_{\mathbf{P}})_0\ 
\end{equation*}
since $\omega_\epsilon$ is canonically isomorphic to $\omega_{\mathbf{P}}$
using the fixed trivialization \eqref{fvvbv}.
Since all the lower ${\mathcal{E}}xt$ sheaves vanish \cite{pt},
we get a canonical map
$$
\mathcal{O}_{\mathbf{P}} \rightarrow
\mathcal{RH}om( \mathcal{I}^\bullet, \mathcal{I}^\bullet)_0[-1] \ \ \ 
\text{via} \ \ \
{\mathcal{E}}xt^1(\mathcal{I}^\bullet,
\mathcal{I}^\bullet \otimes \omega_{\mathbf{P}})_0\rarr
\mathcal{RH}om( \mathcal{I}^\bullet, \mathcal{I}^\bullet\otimes
\omega_{\mathbf{P}})_0[-1] \ . 
$$
After taking hypercohomology and applying Serre duality, we obtain
\begin{equation}\label{dccn}
\com \rarr \text{Ext}^1(\mathcal{I}^\bullet,\mathcal{I}^\bullet\otimes
\omega_{\mathbf{P}}), \ \ \ \
\text{Ext}^2(\mathcal{I}^\bullet,\mathcal{I}^\bullet) \rarr \mathbb{C}\ . 
\end{equation}
 
If the stable pair $\mathcal{I}^ \bullet$
is not of minimal Euler characteristic in
a fiber class, 
the map on the left in \eqref{dccn} is not trivial (as vertical translation then
induces a nontrivial deformation of $\mathcal{I}^ \bullet$).
Hence, the map on the right in \eqref{dccn}
provides the desired trivial quotient of the obstruction theory.

For the relative geometry $\mathbf{P}(\OO_S \oplus N)/S_\infty$,
we consider the deformation theory relative to the Artin
stack of destabilizations of the target. Then, the construction of
the trivial quotient is exactly as above.
A destabilization $\mathcal{D}$ of $\mathbf{P}(\OO_S \oplus N)/S_\infty$
maps canonically to $S$,
$$\epsilon: \mathcal{D} \rarr S\ .$$
We work with the relative dualizing sheaf $\omega_\epsilon$ and
define $T_\epsilon = \omega_\epsilon^*$. The rest is the same.

\vspace{+16 pt}
\noindent Departement Mathematik \hfill Department of Mathematics \\
\noindent ETH Z\"urich \hfill  Princeton University \\
\noindent rahul@math.ethz.ch  \hfill rahulp@math.princeton.edu \\

\vspace{+8 pt}
\noindent
Department of Mathematics\\
Princeton University\\
apixton@math.princeton.edu

\end{document}